\newtheorem{thm}{Theorem}[section]
\newtheorem{prop}[thm]{Proposition}
\newtheorem{lem}[thm]{Lemma}
\newtheorem{ass}[thm]{Assumption}
\newcommand{\f}{\frac}
\newcommand{\ds}{\displaystyle}
\newcommand{\E}{ {\mathbb{E}} }
\newcommand{\be}{\begin{equation}}
\newcommand{\ee}{\end{equation}}
\def\ds{\displaystyle}
     \def\({\Big (}       
      \def\){\Big )}       
      \def\[{\Big[}        
          \def\]{\Big]}        
               \def\1n{\negthinspace}
           \def\2n{\1n\1n}      
         \def\3n{\1n\2n}
\def\ba{\begin{array}}                \def\ea{\end{array}}
\def\bel{\begin{equation}\label}      \def\ee{\end{equation}}
\sloppy  \allowdisplaybreaks[4]
\def\be{\begin{equation}}
\def\bel{\begin{equation}\label}
\def\ee{\end{equation}}
\def\bea{\begin{eqnarray}}
\def\eea{\end{eqnarray}}
\def\bt{\begin{theorem}\label}
\def\et{\end{theorem}}
\def\bc{\begin{corollary}\label}
\def\ec{\end{corollary}}
\def\bex{\begin{example}\label}
\def\ex{\end{example}}
\def\bl{\begin{lemma}\label}
\def\el{\end{lemma}}
\def\bp{\begin{proposition}\label}
\def\ep{\end{proposition}}
\def\br{\begin{remark}\label}
\def\er{\end{remark}}
\def\ba{\begin{array}}
\def\ea{\end{array}}
\def\bde{\begin{definition}\label}
\def\ede{\end{definition}}
\begin{document}
\title{Convergence Analysis for Training Stochastic Neural Networks via Stochastic Gradient Descent}
\author{
Richard Archibald\thanks{ Computational Science and Mathematics Division, Oak Ridge National Laboratory, Oak Ridge, Tennessee  };
\and Feng Bao\thanks{ Department of Mathematics, Florida State University, Tallahassee, Florida, \ ({\tt bao@math.fsu.edu})};
\and Yanzhao Cao \thanks{ Department of Mathematics, Auburn University, Auburn, Alabama,  } 
\and Hui Sun \thanks{ Department of Mathematics, Florida State University, Tallahassee, Florida.}
}

\maketitle
\begin{abstract}
In this paper, we carry out numerical analysis to prove convergence of a novel sample-wise back-propagation method for training a class of stochastic neural networks (SNNs). The structure of the SNN is formulated as discretization of a stochastic differential equation (SDE). A stochastic optimal control framework is introduced to model the training procedure, and a sample-wise approximation scheme for the adjoint backward SDE is applied to improve the efficiency of the stochastic optimal control solver, which is equivalent to the back-propagation for training the SNN. The convergence analysis is derived with and without convexity assumption for optimization of the SNN parameters. Especially, our analysis indicates that the number of SNN training steps should be proportional to the square of the number of layers in the convex optimization case. Numerical experiments are carried out to validate the analysis results, and the performance of the sample-wise back-propagation method for training SNNs is examined by benchmark machine learning examples.

\end{abstract} 

{\bf Keywords.} Probabilistic learning, stochastic neural networks, convergence analysis, backward stochastic differential equations, stochastic gradient descent


\section{Introduction}

Deep neural network (DNN) based machine learning techniques are positioned to fundamentally change many sectors of society by offering decision making capabilities, which match and often exceed that of human experts \cite{ML_future_17, DL_Nature_15, silver2017mastering, EfficientPrO2017Sze, td-gammon1994, AlphaGo2017Silver2017,agent57-2020}. Despite of dramatic success that DNNs achieved, a closer examination reveals inherent challenges of applying such approaches broadly to science and engineering. A major challenge is that DNN models are sensitive to noises in data. A recent study \cite{Error_ML} shows that effectiveness of different DNN models are affected by inaccurate data in the datasets, and science and engineering solutions typically need to be able to quantify the uncertainty of DNN outputs and to provide effective operating ranges and risks. 

One  successful effort  to address the challenge of uncertainty quantification for DNN is probabilistic learning, which aims to incorporate randomness to DNN models and produce random output. Then, one can use the statistical behaviors of DNN's random output to study the uncertainty of data. The state-of-the-art probabilistic learning approach is the Bayesian neural network (BNN) \cite{Geneva2019, Geneva2020, Kwon2020, McDermott2019, Savchenko2020, Wu2020, Yang2021,Yao2019}, which treats parameters in a DNN as random variables and approximates their distributions through Bayesian inference. Instead of searching for the optimal parameters by deterministic optimization, the BNN utilizes Bayesian optimization to derive parameter distributions. As a result, the estimated random parameters generate random output, which characterizes the target model's uncertainty. Although the BNN approach provides a principled conceptual framework to quantify the uncertainty in probabilistic machine learning, carrying out Bayesian optimization to estimate a massive number of parameters in the BNN often renders the exact inference of parameter posteriors intractable. Moreover, the large size of the parameter set leads to uninterpretable parameter priors.  
Some of the existing Non-Bayesian methods, such as \cite{Gal2016, Lak2017}, are simple to implement but either ad hoc or computationally prohibitive.  


In this work, we consider a type of stochastic neural networks that models uncertainty in some of the well known DNN structures (e.g., residual neural networks, convolutional neural networks) through discretized stochastic (ordinary) differential equations (SDEs). Recently, an ordinary differential equation (ODE) interpretation for DNNs (named Neural-ODE) has been introduced and studied \cite{Chen2018, Dupont2019, E2017, Gerstberger1997, Haber2018}. The central idea of the Neural-ODE is to formulate the evolution of potentially huge hidden layers in the DNN as a discretized ODE system. To characterize the randomness caused by the uncertainty of models and noises of data,  we add an additive Brownian motion noise to the ODE to count for the model uncertainty of the DNN.  This changes the ODE to an SDE, and the deterministic DNN becomes a stochastic neural network (SNN) \cite{ Kong2020, Liu2020, Tzen2019}.  In the SNN model, the drift parameters serve as the prediction of the network, and the stochastic diffusion governs the randomness of network output, which serves to quantify the uncertainty of deep learning.  In comparison with the BNN, which needs to calibrate tremendous amount of random parameters through Bayesian inference, the SNN only uses one noise term (i.e., the diffusion term) to control the amount of uncertainty at each layer. Apparently, the computational cost for evaluating the diffusion coefficient is much lower than high-dimensional Bayesian inference for a large number of unknown random parameters. On the other hand, for an SNN with multiple hidden layers, by stacking (controlled) diffusion terms together through the multi-layer structure, we would still be able to characterize sufficient probabilistic behavior of the neural network.

The bottleneck of the SNN is constructing an efficient numerical solver for the back-propagation. This process propagates the total loss back into the neural network and updates the parameters accordingly to minimize the loss. It is the essence of neural network training. For Neural-ODE, back-propagation is equivalent to solving the adjoint of the ODE, which is a backward ODE. Since its structure is no different from that of a forward ODE, back-propagation can be achieved by means of a deterministic adjoint ODE solver \cite{Chen2018}.  For SNNs, the back-propagation is equivalent to solving the adjoint of the SDE, which is a backward SDE. Because of the martingale nature of the backward SDE \cite{BSDE_finance}, one must create a separate backward SDE solver which is completely different from and far more computationally intensive than forward SDE solvers. Thus the back-propagation by completely solving the backward SDE has been deemed unfeasible \cite{Tzen2019}. Several  alternatives have been proposed \cite{Kong2020, Liu2020, Tzen2019}. But they either fail to consider the Ito's nature of stochastic differentiation or lack scalability. Thus these training frameworks have been deemed inefficient \cite{Li2021}. 
In a recent study \cite{Bao_SNN}, we formulated the training procedure for SNN as a stochastic optimal control problem, and introduced a sample-wise scheme for solving the backward SDE corresponding to the forward propagation of the SNN and constructed a novel back-propagation solver.  A central  idea of our method is to treat the random samples of the backward SDE as ``pseudo data", and only solve the backward SDE partially on randomly selected pseudo data. In this way, only a tiny fraction of the computing cost of solving the entire backward SDE is required to complete one training iteration. Therefore, our novel sample-wise back-propagation method for SNNs will be a feasible tool for the uncertainty quantification of deep learning. Although the sample-wise back-propagation procedure does not produce accurate numerical solutions for backward SDEs, we want to point out that the solutions of the adjoint equation are only used to construct the gradient process for gradient descent optimization, and obtaining complete numerical solutions for backward SDEs is not necessary in the back-propagation procedure. 

The contribution of this work has a theoretical aspect and a numerical implementation aspect. Our theoretical contribution will focus on deriving convergence analysis for the sample-wise back-propagation algorithm. Different from classic convergence results for stochastic gradient descent (SGD) based optimization, the main challenge in the convergence analysis for our sample-wise back-propagation solver is that the gradient process is formulated by (discrete) approximated solutions for the adjoint backward SDE, and classic numerical methods for solving backward SDEs require obtaining complete numerical approximations for the solutions in the entire state space \cite{Zhang_BSDE}. However, a sample-wise approximation (or an approximation on mini-batch) for the solution does not carry enough information to deduce convergence of the numerical solver with standard techniques for backward SDEs. Our strategy to address this issue is to introduce an augmented $\sigma$-algebra that contains the uncertainty information in both the SNN model and the training data. After proving some basic properties for the sample-wise approximators, we can derive that the stochastic approximation for the gradient conditioning on the augmented $\sigma$-algebra is an unbiased estimator.  This result would help us obtain our main convergence theorems without usage of general convergence of numerical solvers for backward SDEs. In this paper, we first assume that the optimization problem for searching the optimal SNN parameters (or equivalently the optimal control) is convex, which is a classic assumption in most existing numerical analysis results for stochastic optimal control problems. Our analysis under the convexity assumption shows that the sample-wise back-propagation algorithm will result in two (mean square) error terms: The first error term gives half-order convergence with respect to the depth of neural networks; the second error term is a quotient of the depth of SNN and the number of training steps.  While the half-order convergence with respect to the depth indicates convergence of the discrete SNN model in approximating the continuous SDE formulation of probabilistic learning, the quotient error term can provide inherent relation between the depth of a neural network and the number of iterations needed in the training procedure. Although several analysis outcomes have been derived to study convergence of training with respect to SGD iterations, very limited research is conducted to explore the inherent connection between the depth of a neural network and the number of training iterations that is needed. Moreover, since the convexity assumption may not hold in many practical machine learning applications, we will also derive a convergence theorem without assuming the optimization problem is convex. Specifically, we will show that for an SNN with a fixed depth, our proposed sample-wise back-propagation algorithm will converge with respect to training steps.   

In addition to the effort on deriving theoretical analysis results, in this paper we shall carry out various numerical experiments to examine the performance of our sample-wise back-propagation method, and we will adopt well established DNN structures to the SNN model when solving practical machine learning problems. Specifically, besides demonstration examples of synthetic mathematical problems, which are designed to showcase the baseline performance and convergence behavior of our algorithm, we will also solve the benchmark classification problem with MNIST handwritten dataset and Fashion-MNIST dataset. To enhance the learning capability of SNNs to accomplish the classification task, convolution blocks are incorporated into the SNN model in the numerical implementation. Other DNN structures can be adopted in a similar way to solve machine learning problems in different scenarios. Through our numerical experiments, we will validate the theoretical results that we derived in this paper, and we will present the stability/robustness advantage of SNN (as a probabilistic learning method) compared with the conventional deterministic convolution neural network.

\vspace{0.5em}
The rest of this paper is organized as follows: In Section \ref{Preliminary}, we formulate the training procedure for SNNs as a stochastic optimal control problem and briefly introduce our sample-wise back-propagation method. The main convergence theorems and their proofs will be provided in Section \ref{Convergence}. In Section \ref{Numerics}, we will validate our analysis results and examine the performance of SNNs through several numerical experiments. 

\section{A sample-wise back-propagation method for stochastic neural networks}\label{Preliminary}

In this section, we introduce our sample-wise back-propagation method for training stochastic neural networks. We first formulate the training procedure for the continuous formulation of a stochastic neural network as a stochastic optimal control problem. Then we solve this stochastic optimal control problem via a stochastic gradient descent algorithm. 

\subsection{Stochastic optimal control formulation for SNN}

We consider the following dynamical system for a stochastic version of deep neural network (DNN):
\begin{equation}\label{SNN-discrete}
X_{n+1} = X_n + h f(X_n, u_{n})  + \sqrt{h} g(u_{n}) \omega_n, \qquad n = 0, 1, 2, \cdots, N-1,
\end{equation}
where $X_n: = [x_n^1, x_n^2, \cdots, x_n^L] \in \mathbb{R}^L$ is a vector that contains $L$ neurons at the $n$-th layer in a DNN, $b$ is an activation function, $u_n$ denotes the set of DNN parameters at the $n$-th layer, $h$ is a positive constant that stabilizes the DNN, $\omega_n$ is a standard $L$-dimensional Gaussian random variable that brings uncertainty to the neural network, and $g$ is a coefficient function that determines the size of uncertainty in the DNN. Since the uncertainty of the DNN is incorporated by the Gaussian random variables, we assume that parameters $\{u_n\}_n$ are deterministic. The initial state $X_0$ of the dynamical system \eqref{SNN-discrete} represents the input layer, and $X_N$ is the output layer, which will be compared with data. We call the noise perturbed DNN model \eqref{SNN-discrete} a stochastic neural network (SNN) in this paper. 

If we choose a positive constant $T$ (as a terminal time) and let $N \rightarrow \infty$ or $h \rightarrow 0$ with $h = \f{T}{N}$, the dynamics of SNN \eqref{SNN-discrete} becomes the  stochastic differential equation (SDE) which, in the integral form, is given by 
\begin{equation}\label{SNN-continuous}
X_T = X_0+ \int_{0}^T f(X_t, u_t) dt  + \int_{0}^T g(u_t) dW_t,
\end{equation}
where $W:= \{W_t\}_{0 \leq t \leq T}$ is a standard Brownian motion corresponding to the i.i.d. Gaussian random variable sequence $\{w_n\}_n$ in \eqref{SNN-discrete},  $\int_{0}^T g(u_t) dW_t$ is an It\^o integral.  $X_T$ is now corresponding to the output layer. Let $\Gamma$ be the random variable that generates the training data to be compared with $X_T$ and define  a loss function $\Phi(X_T, \Gamma): = \|X_T - \Gamma\|_{loss}$ corresponding to a loss error norm $\|\cdot\|_{loss}$. The goal of the training procedure for deep learning is to determine the optimal parameter to minimize the loss function.

In this paper, we shall treat the training procedure for deep learning as a stochastic optimal control problem and the parameter $u_t$ in \eqref{SNN-continuous} as a control process.  To this end,  we define  the cost functional $J$ as 
\begin{equation}\label{eq:cost_functional}
J(u) = \E\left[ \int_{0}^T r(X_t, u_t) dt + \Phi(X_T, \Gamma) \right],
\end{equation}
where the integral $\int_{0}^T r(X_t, u_t) dt$ represents the running cost in a control problem.  In this way, the loss function becomes the terminal cost in the stochastic optimal control problem. Note that the stochastic process $X$ is the ``state process'' for the stochastic optimal control problem which depends on the control $u$.  The goal of deep learning is to solve the stochastic optimal control problem, i.e.,  find the optimal control $u^*$ such that 
\begin{equation}\label{control-cost}
J(u^*) = \inf_{u \in \mathcal{U}[0, T]} J(u),
\end{equation}
where $\mathcal{U}[0, T] : = \{u \in L^2\big([0, T]; \mathbb{R}^p \big) \}$ is a $p$-dimensional admissible control set. 

In order to determine the optimal control, one can derive the following gradient process \cite{Yong_BSDE, Yong_control}:
\begin{equation}\label{gradient-continuous}
\nabla J_u(u_t) =  \E\big[ f_u(X_t, u_t)^\top Y_t + g_u(u_t)^\top Z_t + r_u(X_t, u_t)^\top\big],
\end{equation}
where $f_u$, $g_u$ and $r_u$ are partial derivatives with respect to the control $u$. In this paper, we use the notation $\nabla J_u$ to denote the gradient of the cost $J$ with respect to the control process $u$, and the control at time $t$, i.e., $u_t$, in $\nabla J_u(\cdot)$ indicates that we are considering the gradient process at time $t$. 
Here the stochastic processes $Y_t$ and $Z_t$ in \eqref{gradient-continuous}  are the adapted solutions of the following backward SDE:
\begin{equation}\label{BSDE-continuous}
dY_t = \left(- f_x(X_t, u_t)^\top Y_t - r_x(X_t, u_t)^\top\right)dt + Z_t dW_t, \qquad Y_T = \Phi'_x(X_T, \Gamma),
\end{equation}
where $f_x$, $r_x$ are partial derivatives with respect to the state $X$, $Y$ is the adjoint process of the state SDE $X$, and $Z$ is the martingale representation of $Y$ with respect to $W$. An important property of solutions of the backward SDE is that values of $Y$ and $Z$ depend on $X$, and $(Y_t, Z_t) \in \mathcal{F}^W_t$, where $\mathcal{F}_t^W := \sigma(W_s, 0 \leq s \leq t)$. 

Then, one can carry out gradient descent optimization to determine the optimal control by using the gradient $\nabla J_u$ introduced in \eqref{gradient-continuous} as follows:
\begin{equation}\label{GD-continuous}
u^{k+1}_t = \mathcal{P}_{\mathcal{U}}\big( u^{k}_t - \eta_k \nabla J_u(u_t^k)\big), \qquad k = 0, 1, 2, \cdots, \quad 0 \leq t \leq T,
\end{equation}
where $u^0$ is an initial guess for the optimal control, $\eta_k$ is the step-size of gradient descent in the $k$-th iteration step, and $\mathcal{P}_{\mathcal{U}}$ is a projection operator onto the admissible control set $\mathcal{U}$.

Numerical implementation of the gradient descent optimization scheme \eqref{GD-continuous} requires numerical approximations for solutions $X$, $Y$ and $Z$ of the backward SDE \eqref{BSDE-continuous}. In the following subsection, we shall first introduce numerical schemes to approximate $Y$ and $Z$, and then utilize stochastic approximation to efficiently carry out a stochastic gradient descent procedure to determine the optimal control.

\subsection{Numerical schemes for backward SDEs and stochastic gradient descent optimization}\label{Algorithm}
We solve the backward SDE  \eqref{BSDE-continuous} over a uniform temporal partition $\Pi_N :=\{0 = t_0 < t_1 < t_2 < \cdots < t_N = T\}$. Denote by $h$ the stepsize of the partition.  On each subinterval $[t_n, t_{n+1}]$, $n=0, 1, \cdots, n-1$, the  backward SDE  satisfies 
\begin{equation}\label{BSDE-discrete}
Y_{t_{n}} = Y_{t_{n+1}} + \int_{t_n}^{t_{n+1}}f_x(X_t, u_t)^\top Y_t + r_x(X_t, u_t)^\top dt - \int_{t_n}^{t_{n+1}} Z_t dW_t.
\end{equation}
We introduce the following Euler-Maruyama scheme to approximate the forward SDE \eqref{SNN-continuous} on interval $[t_n, t_{n+1}]$:
\begin{equation}\label{SDE-discrete}
X_{t_{n+1}} = X_{t_n} + f(X_{t_n}, u_{t_n}) h + g(u_{t_n}) \Delta W_{t_n} + R_{n}^X,
\end{equation}
where $\Delta W_{t_n} : = W_{t_{n+1}} - W_{t_n}$, and $R_{n}^X$ denotes the  approximation errors of deterministic and stochastic integrals.

To approximate solution $Y_{t_n}$, we take conditional expectation $\E_n^X[\cdot] : = \E[\cdot \big| X_{t_n} = X]$ on both sides of \eqref{BSDE-discrete} and approximate the deterministic integral by using the right-point formula to get
\begin{equation}\label{E:BSDE-discrete}
Y_{t_{n}}|_{_{X_{t_n} = X}} = \E_n^X[Y_{t_{n+1}}] + h \E_n^X[ f_x(X_{t_{n+1}}, u_{t_{n+1}})^\top Y_{t_{n+1}} + r_x(X_{t_{n+1}}, u_{t_{n+1}})^\top] + R_{n}^Y,
\end{equation}
where $R_{n}^Y$ is the approximation error, and we denote $Y_{t_{n}}|_{_{X_{t_n} = X}} = \E_n^X[Y_{t_{n}}]$ since $Y_{t_n}$ is $\mathcal{F}^W_{t_n}$ measurable. Note that the stochastic integral $\int_{t_n}^{t_{n+1}} Z_t dW_t$ is eliminated under expectation. 

To get an approximation for $Z_{t_n}$, we multiply $\Delta W_{t_n}$ on both sides of \eqref{BSDE-discrete} and then take conditional expectation $\E_n^X[\cdot]$ to get
\begin{equation*}
\begin{aligned}
0 = \E_n^X[ Y_{t_{n+1}} \Delta W_{t_n}] + \E_n^X\Big[\int_{t_n}^{t_{n+1}} & f_x(X_t, u_t)^\top Y_t + r_x(X_t, u_t)^\top dt \Delta W_{t_n} \Big] \\
& - \E_n^X\big[ \int_{t_n}^{t_{n+1}} Z_t dW_t\Delta W_{t_n} \big].
\end{aligned}
\end{equation*}
We use the left-point formula to approximate both the deterministic and the stochastic integral in the above equation and obtain
\begin{equation}\label{E:Z-discrete}
Z_{t_n}|_{_{X_{t_n} = X}} = \E_n^X[ \f{Y_{t_{n+1}} \Delta W_{t_n} }{h}] + R_{n}^Z,
\end{equation}
where $R_n^Z$ is the approximation error. 

By dropping the approximation error terms $R_n^X$, $R_n^Y$ and $R_n^Z$ in \eqref{SDE-discrete}, \eqref{E:BSDE-discrete} and \eqref{E:Z-discrete}, respectively, we get the following numerical schemes to solve the backward SDE \eqref{BSDE-continuous} (see \cite{Zhang_BSDE, Zhao_BSDE, Bao_first, Bao_Zakai, BCZ_2011, BCZ_2015, BCZ_2018, BSDE_filter} for more details): 
\begin{equation}\label{scheme:X}
X^{N}_{n+1} = X^{N}_n + h f(X^{N}_n, u_{t_n}) + g(u_{t_n}) \Delta W_{t_n},
\end{equation}
\begin{equation}\label{schemes:YZ}
\begin{aligned}
Y^{N}_{n} =& \ \E_n^X[Y^{N}_{n+1}] + h \E_n^X\big[ f_x(X^{N}_{n+1}, u_{t_{n+1}})^\top Y^{N}_{n+1} + r_x(X^{N}_{n+1}, u_{t_{n+1}})^\top \big], \\
Z^{N}_n =& \ \E_n^X\Big[ \f{Y^{N}_{n+1} \Delta W_{t_n}}{h}\Big],
\end{aligned}
\end{equation}
where $X^{N}_{n+1}$, $Y^{N}_n$ and $Z^{N}_n$ are numerical approximations for $X_{t_{n+1}}$, $Y_{t_n}$ and $Z_{t_n}$, respectively. The reason that we can remove the conditional expectation for $Y^{N}_{n}$ and $Z^{N}_n$ on the left hand side of Eq. \eqref{schemes:YZ} is due to that fact that $Y_{t_n}$ and $Z_{t_n}$ are adapted to $\mathcal{F}^W_{t_n}$. 
Then, we replace $X_{t_n}$, $Y_{t_n}$ and $Z_{t_n}$ in \eqref{gradient-continuous} with our numerical solutions and obtain a piece-wise approximation for $\nabla J_u(t, u_{t})$ over the interval $[t_n, t_{n+1})$ as follows:
\begin{equation}\label{gradient-discrete}
\nabla J_u(t, u_{t}) \approx  \nabla J^N_u(t_n, u_{t_n}) : = \E\big[ f_u(X^N_{n}, u_{t_n})^\top Y^{N}_{n} + g_u(u_{t_n})^\top Z^{N}_{n} + r_u(X^N_{n}, u_{t_n})^\top\big]. 
\end{equation}

To calculate the gradient through scheme \eqref{gradient-discrete}, we need to evaluate the expectation $\E[\cdot]$ (with respect to the state process $X$).  In addition, the approximate solutions $Y_n$ and $Z_n$ are under the conditional expectation $\E_n^X[\cdot]$. The conventional approach for simulating expectations is the Monte Carlo method. However, when using stochastic optimal control to formulate the training procedure of the SNN, the dimension of the state $X$ is equivalent to the number of neurons. Hence it requires  very large number of Monte Carlo samples to approximate expectations, and the computational cost for calculating the gradient in each gradient descent iteration step is very high. Moreover, note that   $Y$ and $Z$ depend on $X$, which is typically represented by a set of pre-chosen points in the state space. Therefore, interpolation methods are needed to calculate $Y_n$ and $Z_n$ at random state points in order to implement the Monte Carlo simulation. This is a very challenging task due to ``curse of dimensionality''.

To address the above challenges, we apply the stochastic approximation method to approximate expectations in the approximation scheme \eqref{gradient-discrete} for the gradient $\nabla J_u$. Specifically, at the $k$-th iteration step with an estimated optimal control $u^k$, we simulate the state process by choosing a sample $\omega^k \sim N(0, h)$ and get
\begin{equation}\label{SDE-sample}
X^k_{n+1} = X^k_n + h f(X^k_n, u^k_{t_n}) + g(u^k_{t_n}) \omega_n^k.
\end{equation}
Instead of using the average of a large number of Monte Carlo samples, we approximate the expectations in schemes \eqref{schemes:YZ} with the single-realization of the simulate sample path $\{X^k_n\}_{n=1}^N$ and obtain the following sample-wise solution for $Y$ and $Z$:
\begin{equation}\label{YZ:sample}
Y^k_{n} = Y^k_{n+1} + h \big( f_x(X^k_{n+1}, u^k_{t_{n+1}})^\top Y^k_{n+1} + r_x(X^k_{n+1}, u^k_{t_{n+1}})^\top\big), \  Z^k_n = \f{Y^k_{n+1} \omega_n^k }{h},
\end{equation}
where $Y^k_n$ and $Z^k_n$ are approximations for $Y_{t_n}|_{X_{t_n} = X^k_{n}}$ and $Z_{t_n}|_{X_{t_n} = X^k_{n}}$, respectively. Although schemes \eqref{YZ:sample} do not produce numerical solutions for $Y$ and $Z$ in the entire state space,  the solutions $Y$ and $Z$ of the backward SDE \eqref{BSDE-continuous} \textit{only appear in the gradient process}, and complete numerical solutions for $Y$ and $Z$ in the entire state space are not necessary in the gradient descent optimization procedure for the optimal control. In this way, the justification for application of stochastic approximation in stochastic gradient descent can be used to explain the design of the sample-wise approximation scheme \eqref{YZ:sample}. Therefore, we approximate the gradient $\nabla J_u$ by the following scheme
\begin{equation}\label{gradient-sample}
\nabla j^k_u(u^k_{n}) :=  f_u(X^k_{n}, u^k_{n})^\top Y^k_{n} + g_u(u^k_{n})^\top Z^k_{n} +  r_u(X^k_{n}, u^k_{n})^\top,
\end{equation}
and we carry out stochastic gradient descent (SGD) optimization to search for the optimal control $u^*$ as follows:
\begin{equation}\label{SGD-u}
u^{k+1}_{t_n} = \mathcal{P}_{\mathcal{U}_N}\big( u^{k}_{t_n} - \eta_k \nabla j^k_u(u^k_{t_n})\big), \qquad k = 0, 1, 2, \cdots, \quad 0 \leq n \leq N,
\end{equation}
where $\mathcal{U}_N : = \mathcal{U} \cap \mathcal{C}_N$ with a piece-wise constant approximation set $\mathcal{C}_N : = \{u| u = \sum_{n=0}^{N-1} a_n 1_{[t_n, t_{n+1})}, a_n \in \mathbb{R}^p\}$ for the control process. 

The above schemes \eqref{SDE-sample} - \eqref{SGD-u} provide  an SGD algorithm for solving the stochastic optimal control problem, and it also constitutes a sample-wise back-propagation framework for training the SNN. It's worthy to mention that in machine learning practice, the above sample-wise back-propagation scheme is often implemented by using a mini-batch of samples instead of a single-realization of sample to represent the state variable. For convenience of presentation, we shall use the single sample representation in the rest of this paper. The numerical implementation and analysis for the mini-batch case can be derived following a similar strategy.

\section{Convergence Analysis}\label{Convergence}
In this section, we conduct the convergence analysis  for the SGD algorithm \eqref{SDE-sample} - \eqref{SGD-u}, and we have the following standard assumptions for backward SDEs and the stochastic optimal control problem  \cite{Zhao_BSDE, Yong_control}:
\begin{ass}\label{Assumptions}
\begin{itemize}
\item[(a)]  Both $f=f(x,u)$ and $g=g(x,u)$  belong to $C_b^{2,2}(\mathbb{R}^d \times \mathbb{R}^p; \mathbb{R}^d)$.
\item[(b)] $f$, $f_x$, $f_u$, $g$, $g_u$, $r_x$, $r_u$ are uniformly bounded and Lipschitz in $x$ and $u$.
\item[(c)] $g$ satisfies the uniform elliptic condition.
\item[(d)] The initial condition $X_0 \in L^2(\mathbb{R}^d)$.
\item[(e)] The loss function $\Phi$ is $C^1$ and positive, and $\Phi'_x$ has at most linear growth at infinity.
\item[(f)] $\lim_{\|u\|_2 \rightarrow \infty} J(u) = \infty$. 
\vspace{0.5em}
\end{itemize}
\end{ass}
We define the standard  inner product of $u, v \in L^2([0, T])$ by $\langle u, v\rangle = \int_{0}^T u_t \cdot v_t dt$ and the  standard $L^2$ norm $\|\cdot\|_2$ by $\|u\|_2:=\sqrt{\langle u, u\rangle}$. Without confusion, we use the same notation to denote the  inner product of two piece-wise constant representations $u^N$, $v^N\in \mathbb{R}^N$ of $u$, $v$:  $\langle u^N, v^N \rangle = h \sum_{n=0}^{N-1}u_n^N \cdot v_n^N$. Also, we use $|\cdot|$ to denote the Euclidean norm or Frobenius norm.

\vspace{0.5em}
Our analysis in this paper will focus on the convergence with respect to the temporal partition $\Pi_N $. Convergence regarding the number of neurons, i.e., the dimension of the state $X$ in SDE \eqref{SNN-continuous}, may bring out more complicated discussions on spatial dimension approximation and representability of neural networks, which is out of scope of this paper.  Specifically, we will derive convergence theorems under two scenarios.  First, we assume that the cost functional  for the optimal control is convex and prove a half-order convergence rate for our algorithm. In the second situation, we consider the case without convexity assumption and provide a convergence result for our sample-wise back-propagation method. 

\subsection{Preliminaries}
The foundation of the convergence analysis  is based on the fact that the sample-wise solutions $Y^{k}_n$ and $Z^{k}_n$ introduced in \eqref{YZ:sample} are equivalent to the classic numerical solutions $Y_n^N$ and $Z_n^N$ introduced in \eqref{schemes:YZ} under conditional expectation $\E_n^X[\cdot]$. Specifically, we have the following proposition. 
\begin{prop}\label{Y^k=Y^N}
For given estimated control $u^k \in \mathcal{U}_N$, let $Y_n^{k, N}$ and $Z_n^{k, N}$ be the numerical solutions defined in \eqref{schemes:YZ} driven by $u^k$.  Then following identities hold:
\begin{equation}\label{Y=Y}
\E_n^X[Y_n^k] = Y_n^{k, N}|_{_{X_{n} = X}}, \qquad \E_n^X[Z_n^k] = Z_n^{k, N}|_{_{X_{n} = X}}, \qquad 0 \leq n \leq N-1, 
\end{equation} 
and therefore we have $\E[Y_n^k] = \E[Y_n^{k, N}]$ and $\E[Z_n^k] = \E[Z_n^{k, N}]$.
\end{prop}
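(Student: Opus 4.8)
The plan is to prove the two identities in \eqref{Y=Y} by backward induction on $n$, running from $n = N$ down to $n = 0$, and then the two unconditional expectation identities follow by the tower property. The key observation is that the sample-wise recursion \eqref{YZ:sample} and the classic scheme \eqref{schemes:YZ} are driven by the \emph{same} control $u^k$ and the \emph{same} noise increments (with $\omega_n^k$ playing the role of $\Delta W_{t_n}$), and that both start from the same terminal data $Y_N = \Phi'_x(X_N, \Gamma)$; the only difference is that \eqref{schemes:YZ} takes a conditional expectation $\E_n^X[\cdot]$ at each step while \eqref{YZ:sample} does not. So the content of the proposition is that the conditioning operations "commute" with the recursion in the appropriate sense.

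\textbf{Base case.} At $n = N$ the terminal condition gives $Y_N^k = \Phi_x'(X_N^k,\Gamma) = Y_N^{k,N}|_{X_N = X^k_N}$, and conditioning on $X_N = X$ yields $\E_N^X[Y_N^k] = Y_N^{k,N}|_{X_N = X}$ trivially. (One should be slightly careful here because $Y_N$ depends on the data random variable $\Gamma$ as well as on $X_N$; this is exactly the point where the "augmented $\sigma$-algebra" alluded to in the introduction will be needed, so I would set up the conditioning $\E_n^X[\cdot]$ to include the relevant data information, or note that $\Gamma$ is independent of the forward noise so the conditional expectation over $\Gamma$ passes through cleanly.)

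\textbf{Inductive step.} Assume $\E_{n+1}^X[Y_{n+1}^k] = Y_{n+1}^{k,N}|_{X_{n+1}=X}$. For the $Z$-identity, apply $\E_n^X[\cdot]$ to $Z_n^k = Y_{n+1}^k \omega_n^k / h$; since $X_{n+1}^k$ is a deterministic function of $X_n^k$ and $\omega_n^k$, and the one-step forward law matches the Euler--Maruyama law in \eqref{SDE-discrete}, one gets $\E_n^X[Z_n^k] = \E_n^X[Y_{n+1}^k \omega_n^k]/h = \E_n^X[\,\E_n^X[Y_{n+1}^k \mid X_{n+1}^k,\omega_n^k]\,\omega_n^k\,]/h$; using the inductive hypothesis in the form $\E[Y_{n+1}^k \mid X_{n+1}^k = X', \omega_n^k] = Y_{n+1}^{k,N}|_{X_{n+1}=X'}$ (which requires that, conditionally on $X_{n+1}^k$, the law of the \emph{future} sample-wise noise that determines $Y_{n+1}^k$ still matches the scheme — a Markov/independence property of the increments) this becomes $\E_n^X[\,Y_{n+1}^{k,N}|_{X_{n+1}} \omega_n^k\,]/h = Z_n^{k,N}|_{X_n = X}$ by definition \eqref{schemes:YZ}. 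The $Y$-identity is analogous: apply $\E_n^X[\cdot]$ to the first line of \eqref{YZ:sample}, push the conditional expectation inside the $f_x, r_x$ terms (they are deterministic functions of $X_{n+1}^k$ and $u^k$), and use the inductive hypothesis on $Y_{n+1}^k$ together with the matching one-step law to land on the first line of \eqref{schemes:YZ}.

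\textbf{Main obstacle.} The delicate point, which I would treat carefully, is justifying that the inductive hypothesis — an identity about $\E_{n+1}^X[\cdot]$, i.e.\ conditioning on $X_{n+1}$ — can be invoked \emph{inside} the step-$n$ computation, where we are conditioning on $X_n$ and then integrating over $\omega_n^k$. This needs the fact that the sample-wise quantities $Y_{n+1}^k, Z_{n+1}^k, \dots$ are built only from noises $\omega_{n+1}^k, \omega_{n+2}^k, \dots$ which are independent of $\omega_n^k$ and of $X_n^k$, so that conditionally on $X_{n+1}^k$ the construction of $Y_{n+1}^k$ is "the same" whether or not we also condition on $\omega_n^k$ and $X_n^k$ — in other words a Markov property of the forward chain plus independence of increments. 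Once that is set up, everything else is bookkeeping with the tower property and linearity of conditional expectation. The final claim $\E[Y_n^k] = \E[Y_n^{k,N}]$, $\E[Z_n^k] = \E[Z_n^{k,N}]$ then follows by taking $\E[\cdot]$ of \eqref{Y=Y} (the right-hand sides, being $Y_n^{k,N}|_{X_n=X}$ evaluated at the random state $X_n^k$, integrate against the correct law of $X_n$).
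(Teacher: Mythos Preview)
Your proposal is correct and follows essentially the same approach as the paper: backward induction from the terminal condition, using that $\omega_n^k$ and $\Delta W_{t_n}$ have the same law together with the tower property to pass from step $n+1$ to step $n$. If anything, you are more explicit than the paper about the Markov/independence justification needed to invoke the inductive hypothesis inside the step-$n$ conditioning; the paper simply does the $n=N-1$ step in detail and then appeals to ``repeatedly applying the equality and the tower property.''
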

\begin{proof}
Note that the random variable $\omega_n^k$ in scheme \eqref{SDE-sample} has the same distribution of $\Delta W_{t_n}$ appeared in \eqref{scheme:X}. Hence, the random variable $\omega_n^k$ is equivalent to $\Delta W_{t_n}$ under expectation.
More generally, for any function $\phi(\{\omega_n^k\}_n)$ of the random sample path $\{\omega_n^k\}_n$, we have $\E[\phi(\{\omega_n^k\}_n)] = \E[\phi(\{\Delta W_{t_n}\}_n)]$.

To obtain the desired results in the proposition, we first consider the case $n = N-1$ (i.e. take one step back from the terminal time), and we have
\begin{equation*}
\E_{N-1}^X [Z_{N-1}^k] = \E_{N-1}^X \left[\f{Y_N^{k} \omega_{N-1}^k}{h}\right] = \E_{N-1}^X \left[\f{Y_N^{k, N} \Delta W_{t_{N-1}}}{h}\right] = Z_{N-1}^{k, N}|_{_{X_{N-1} = X}}.
\end{equation*}
Following the same argument, we also have
\begin{equation*}
\begin{aligned}
\E_{N-1}^X [Y_{N-1}^k] =  \E_{N-1}^X\left[ Y^k_{N} + h \big( f_x(X^k_{N}, u^k_{t_{N}})^\top Y^k_{N} + r_x(X^k_{N}, u^k_{t_N})^\top\big)\right].
\end{aligned}
\end{equation*}
Since $Y^k_N = \Phi_x = Y_N^{k, N}$ and $\E_{N-1}^X[X^{k}_N] = \E_{N-1}^X[X^{k, N}_N]$, where $X^{k, N}_N$ is the approximated solution introduced in \eqref{scheme:X} with the given control $u^k$, the above equation becomes
\begin{equation}\label{EYk=EYN}
\begin{aligned}
\E_{N-1}^X [Y_{N-1}^k] =  \E_{N-1}^X\left[ Y^{k, N}_{N} + h \big( f_x(X^{k,N}_{N}, u^k_{t_{N}})^\top Y^{k, N}_{N} + r_x(X^{k,N}_{N}, u^k_{t_N})^\top\big)\right] = Y_{N-1}^{k, N}|_{_{X_{N-1} = X}}.
\end{aligned}
\end{equation}
Then, by repeatedly applying the equality \eqref{EYk=EYN} and the tower property, we  obtain the desired result \eqref{Y=Y}.
\end{proof}

To proceed, we let $\mathcal{G}_k := \sigma(\omega^i, \gamma^i, 0 \leq I \leq k-1)$ be an augmented $\sigma$-algebra generated by the Gaussian random variables $\omega^i$, which we use to generate state sample path $X^k$ in the sample-wise scheme \eqref{SDE-sample}, and the data sample $\gamma^i$, which is generated by the training data variable $\Gamma$. Based on the above proposition, we see  that the stochastic approximation $\nabla j^k_u(u^k_{t_n})$ that we introduced in \eqref{gradient-sample} is an unbiased estimator for the gradient $\nabla J^N_u(u^k_{t_n})$ given $\mathcal{G}_k$, i.e. $\E[\nabla j^k_u (u^k_{t_n})\big|\mathcal{G}_k]  = \nabla J^N_u(u_{t_n}^k)$. From scheme \eqref{SGD-u}, we observe that the estimated optimal control $u^k$ is $\mathcal{G}_k$ measurable, and we denote $\E^k[\cdot] : = \E[\cdot \big|\mathcal{G}_k]$ in the rest of this paper for convenience of presentation. Therefore, similar to the argument in the proof for Proposition \ref{Y^k=Y^N}, we have 
\begin{equation*}
\begin{aligned}
\E^k[\nabla j^k_u (u^k_{t_n})] =& \ \E^k\left[ f_u(X^k_{n}, u^k_{n})^\top Y^k_{n} + g_u(u^k_{t_n})^\top Z^k_{n} +  r_u(X^k_{n}, u^k_{t_n})^\top \right]\\
=& \  \E^k\left[ f_u(X^{k, N}_{n}, u^k_{t_n})^\top Y^{k, N}_{n} + g_u(u^k_{t_n})^\top Z^{k, N}_{n} +  r_u(X^{k, N}_{n}, u^k_{t_n})^\top  \right] \\
= & \ \nabla J^N_u(u_{t_n}^k).
\end{aligned}
\end{equation*}

In the following lemma, we provide the boundedness property of the sample-wise solution $Y^k$ and the linear growth property for $Z^k$ with for any approximate  control $u^k \in \mathcal{U}_N$. 
\begin{lem}\label{Lem:boundedness}
Under Assumptions (a)-(e), for any $u^k \in \mathcal{U}_N$, we have
$$\sup_{0\leq n \leq N} \E[(Y^k_n)^2] \leq C, \qquad \sup_{0\leq n \leq N} \E[(Z^k_n)^2] \leq C N $$
for some positive constant  $C$.
\end{lem}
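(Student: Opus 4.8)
My plan is to reduce both bounds to two elementary ingredients: a uniform-in-$k$ second-moment bound for the forward Euler iterates from \eqref{SDE-sample}, and an explicit pathwise Gr\"onwall bound for the backward recursion \eqref{YZ:sample} expressing $|Y^k_n|$ through the terminal value $|Y^k_N|$. All constants below depend only on $T$, on the uniform bounds for $f,g,f_x,f_u,g_u,r_x,r_u$ from Assumption (b), on $\E|X_0|^2$, and on the fixed number of neurons $L$; in particular they do not depend on $k$, since the control $u^k$ enters the schemes only through these uniformly bounded coefficients. First I would show $\sup_{0\le n\le N}\E|X^k_n|^2\le C$: conditioning \eqref{SDE-sample} on $\sigma(X_0,\omega^k_0,\dots,\omega^k_{n-1})$ kills the cross term between $X^k_n+hf(X^k_n,u^k_{t_n})$ and $g(u^k_{t_n})\omega^k_n$ (because $\omega^k_n$ is independent of the past with mean $0$), and since $f,g$ are bounded with $\E|\omega^k_n|^2=O(h)$ this yields $\E|X^k_{n+1}|^2\le(1+Ch)\E|X^k_n|^2+Ch$, whence discrete Gr\"onwall together with $X_0\in L^2$ (Assumption (d)) completes this step.

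For $Y^k$, rewriting the first line of \eqref{YZ:sample} as $Y^k_n=(I+hf_x(X^k_{n+1},u^k_{t_{n+1}})^\top)Y^k_{n+1}+hr_x(X^k_{n+1},u^k_{t_{n+1}})^\top$ and using boundedness of $f_x,r_x$ gives the pathwise inequality $|Y^k_n|\le(1+Ch)|Y^k_{n+1}|+Ch$, so iterating from $N$ down to $n$,
\begin{equation*}
|Y^k_n|\le e^{CT}\big(|Y^k_N|+1\big),\qquad 0\le n\le N .
\end{equation*}
Since $Y^k_N=\Phi'_x(X^k_N,\Gamma)$ and $\Phi'_x$ has at most linear growth (Assumption (e)), $|Y^k_N|\le C(1+|X^k_N|+|\Gamma|)$, and combining this with the forward bound and square-integrability of the data $\Gamma$ (implicit in the finiteness of the cost \eqref{eq:cost_functional}) yields $\E[(Y^k_N)^2]\le C$ and hence $\sup_n\E[(Y^k_n)^2]\le C$, the first assertion.

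The second assertion requires $\E[(Y^k_{n+1})^2(\omega^k_n)^2]\le Ch$, and here lies the only genuine difficulty: $Y^k_{n+1}$ is \emph{not} independent of $\omega^k_n$, since through $X^k_{n+1}=\xi^k_n+g(u^k_{t_n})\omega^k_n$, with $\xi^k_n:=X^k_n+hf(X^k_n,u^k_{t_n})$, all of $X^k_{n+1},\dots,X^k_N$ and hence $Y^k_{n+1}$ depend on $\omega^k_n$, so one cannot simply factor the expectation. My strategy is to exploit that this coupling passes only through the \emph{bounded} coefficient $g$: using the pathwise bound above and $X^k_N=X^k_{n+1}+\sum_{m=n+1}^{N-1}\big(hf(X^k_m,u^k_{t_m})+g(u^k_{t_m})\omega^k_m\big)$ one obtains
\begin{equation*}
|Y^k_{n+1}|\le B^k_n+C\,|\omega^k_n|,\qquad B^k_n:=C\Big(1+|\xi^k_n|+\Big|\sum_{m=n+1}^{N-1}g(u^k_{t_m})\omega^k_m\Big|+|\Gamma|\Big),
\end{equation*}
where $B^k_n$ is independent of $\omega^k_n$ and satisfies $\E[(B^k_n)^2]\le C$ (from the first step, the pairwise orthogonality of the martingale-type increments $g(u^k_{t_m})\omega^k_m$, and $\Gamma\in L^2$). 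Since $\E|\omega^k_n|^2=O(h)$ and $\E|\omega^k_n|^4=O(h^2)$, squaring and integrating gives $\E[(Y^k_{n+1})^2(\omega^k_n)^2]\le Ch\,\E[(B^k_n)^2]+Ch^2\le Ch$, hence $\E[(Z^k_n)^2]\le C/h=CN/T\le CN$. I expect this dependence issue to be the crux: a naive treatment (either pretending $Y^k_{n+1}\perp\omega^k_n$, or a crude Cauchy--Schwarz split $\E[(Y^k_{n+1})^2(\omega^k_n)^2]\le(\E(Y^k_{n+1})^4)^{1/2}(\E(\omega^k_n)^4)^{1/2}$) is either not rigorous or forces fourth-moment hypotheses on $X_0$ and $\Gamma$ that are not assumed; the decomposition above avoids this by isolating the single $\omega^k_n$-factor carried by the bounded diffusion coefficient and using that the increment's $2j$-th moment is $O(h^j)$, which is exactly the extra power of $h$ needed for the sharp $O(N)$ rate.
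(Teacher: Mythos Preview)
Your argument is correct and, for the $Z$-bound, actually more careful than the paper's. The paper handles $Y^k_n$ by squaring the recursion in \eqref{YZ:sample}, applying Young's inequality to obtain $\E[(Y^k_n)^2]\le(1+Ch)\E[(Y^k_{n+1})^2]+Ch$, and invoking the discrete Gr\"onwall inequality together with the linear growth of $\Phi'_x$; you obtain instead the \emph{pathwise} estimate $|Y^k_n|\le e^{CT}(|Y^k_N|+1)$ and only then take second moments. Both routes give the first assertion, but your pathwise bound is what makes the second step work cleanly.

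For $Z^k_n$ the paper simply writes $\E[(Z^k_n)^2]\le \E\big[(Y^k_{n+1}\omega^k_n/h)^2\big]\le CN$, tacitly treating $Y^k_{n+1}$ and $\omega^k_n$ as if they could be decoupled; as you correctly observe, $Y^k_{n+1}$ depends on $\omega^k_n$ through $X^k_{n+1}$, so this step is not immediate from the $L^2$ bound on $Y^k_{n+1}$ alone, and a naive Cauchy--Schwarz would require fourth moments of $X_0$ and $\Gamma$ that are not assumed. Your decomposition $|Y^k_{n+1}|\le B^k_n+C|\omega^k_n|$, with $B^k_n$ built from $\xi^k_n$, the later martingale increments $\sum_{m>n}g(u^k_{t_m})\omega^k_m$, and $\Gamma$ (all independent of $\omega^k_n$ since $g$ depends only on the deterministic control), genuinely resolves this issue: independence gives $\E[(B^k_n)^2(\omega^k_n)^2]=\E[(B^k_n)^2]\,\E[(\omega^k_n)^2]=O(h)$ and $\E[(\omega^k_n)^4]=O(h^2)$, so $\E[(Y^k_{n+1}\omega^k_n)^2]\le Ch$ as required. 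The key enabling point, which you use but could state more explicitly, is that the boundedness of $f$ (not merely of $f_x$) lets you control $|X^k_N|$ by $|\xi^k_n|+C|\omega^k_n|$ plus terms independent of $\omega^k_n$, since the drift increments $hf(X^k_m,u^k_{t_m})$ contribute at most $CT$ regardless of their hidden dependence on $\omega^k_n$. In short: the paper's proof is shorter but glosses over precisely the dependence issue you identify; your approach closes that gap at the cost of a slightly longer argument.
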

\begin{proof}
We square both sides of the scheme for $Y^k_n$ defined in  \eqref{YZ:sample} and  then take expectation to get
$$
\begin{aligned}
\E[(Y^k_n)^2] \leq & \ \E\left[(1+h)(1 + h f_x(X^k_{n+1}, u^k_{t_{n+1}})^\top) (Y^k_{n+1})^2\right] + (1 + \f{1}{h}) \E\left[(r_x(X^k_{n+1}, u^k_{t_{n+1}})^\top)^2 h^2\right] \\
\leq & (1 + C h) \E\left[(Y^k_{n+1})^2\right] + C h,
\end{aligned}
$$
where we have used the Young's inequality.
Then, by the discrete Gronwall inequality, we have
$$\sup_{0 \leq n \leq N} \E[(Y^k_n)^2] \leq C \E[(Y^k_N)^2] + C.$$
Due to the Assumption \ref{Assumptions} (e) for the loss function $\Phi$, we have that $\sup_{0 \leq n \leq N} \E[(Y^k_n)^2] \leq C$ as desired.

Next, we  square both sides of the scheme for $Z^k_n$ and take expectation. Based the boundedness for $Y^k_n$, we have that
$$
\begin{aligned}
\E[(Z^k_n)^2] \leq & \ \E\left[\left( \f{Y^k_n  \omega^k_{n}}{h}\right)^2\right] \leq C N,
\end{aligned}
$$
which is the desired boundedness for $Z^k_n$. 
\end{proof}

In addition, we have the following classic boundedness property for solutions of backward SDEs (see Theorem 4.2.1 in \cite{Zhang_BSDE}): 
\begin{equation}\label{bound:YZ:1}
\sup_{0\leq t \leq T} \E[|Y_t|^2] + \E\left[\int_{0}^{T}|Z_t|^2 dt \right] \leq C,
\end{equation}
which will provide the following boundedness result
\begin{equation}\label{bound:YZ:2}
\sup_{0\leq t \leq T} \E[|Y^k_t|^2] + \E\left[\int_{0}^{T}|Z^k_t|^2 dt \right] \leq C
\end{equation}
for the analytic solutions $Y_t^k$ and $Z^k_t$ driven by a control $u^k \in \mathcal{U}_N$, where the constant $C$ is independent of  $u^k$. Following the above estimates, one can also show that $\sup_{0\leq t \leq T} \E[|Z^k_t|^2] \leq C$ under Assumption \ref{Assumptions} (a)-(e). 

Note that the boundedness of the sample-wise solution $Z^k_n$ in the estimate in Lemma \ref{Lem:boundedness} is not as strong as the true solution $Z^k_t$  due to the loss of regularity caused by the sample-wise representation of expectation introduced in \eqref{YZ:sample}.

The next proposition is about the convergence of the numerical solutions to backward SDEs introduced in \eqref{schemes:YZ}  also holds:
\begin{prop}\label{Convergence-Classic}
Under Assumption \ref{Assumptions}, for small enough step-size $h$, there exists $C > 0$ independent of $u^k \in \mathcal{U}_N$ such that
\begin{equation}\label{YZ:Convergence}
\max_{0\leq n \leq N}\E\left[ \sup_{t_n} |Y^k_t - Y^{k, N}_{n}|^2\right] + \sum_{n=1}^{N-1}\E\left[ \int_{t_n}^{t_{n+1}}|Z^k_t - Z^{k, N}_{n}|^2 dt \right] \leq C (1 + |X_0|^2) h.
\end{equation}
\end{prop}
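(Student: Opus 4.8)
My plan is to reduce \eqref{YZ:Convergence} to the classical half-order convergence theory for Euler-type schemes of decoupled forward--backward SDEs (as in \cite{Zhang_BSDE}), since \eqref{scheme:X}--\eqref{schemes:YZ} is exactly the standard right-point scheme for the system \eqref{SNN-continuous}--\eqref{BSDE-continuous}. The one point that genuinely needs attention is that the constant $C$ must be shown to depend on the coefficients \emph{only through the uniform bounds and Lipschitz constants} collected in Assumption \ref{Assumptions}, so that it is independent of the particular control $u^k\in\mathcal{U}_N$; hence I would re-run the argument while carefully tracking this uniformity. A helpful simplification is that the generator of \eqref{BSDE-continuous}, namely $-f_x(X_t,u_t)^\top Y_t - r_x(X_t,u_t)^\top$, is \emph{linear in $Y$ and does not involve $Z$}, which removes the usual Lipschitz-in-$z$ complications from the Gronwall step.

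\textbf{Step 1 (forward error).} First I would record the standard strong Euler--Maruyama estimate for \eqref{scheme:X}, namely $\max_{0\le n\le N}\E\big[\sup_{t_n\le t\le t_{n+1}}|X^k_t - X^{k,N}_n|^2\big]\le C(1+|X_0|^2)h$ with $C$ depending only on the $C_b^{2,2}$-bounds of $f,g$; since $u^k$ enters only through those bounds, $C$ is uniform in $u^k$. Via the Lipschitz continuity of $f_x,r_x$ (Assumption \ref{Assumptions}(b)) this controls the error committed when the numerical driver in \eqref{schemes:YZ} evaluates the coefficients at $X^{k,N}_{n+1}$ rather than at $X^k_{t_{n+1}}$. \textbf{Step 2 (path regularity of $Z$).} Next, introduce the $L^2$-projected increments $\bar Z^k_n := \tfrac1h\,\E\big[\int_{t_n}^{t_{n+1}} Z^k_t\, dt \,\big|\, \F^W_{t_n}\big]$. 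The key analytic input is the $Z$-path-regularity estimate $\sum_{n=0}^{N-1}\E\big[\int_{t_n}^{t_{n+1}} |Z^k_t - \bar Z^k_n|^2\, dt\big]\le C(1+|X_0|^2)h$, valid under Assumption \ref{Assumptions}(a)--(e); this is where the $C_b^{2,2}$ smoothness is consumed, the estimate following from the representation of $Z^k$ via the Malliavin derivative (equivalently the space derivative of the decoupling field), again with a constant depending only on the uniform data. By the triangle inequality it then suffices to bound $h\sum_n \E[|\bar Z^k_n - Z^{k,N}_n|^2]$.

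\textbf{Step 3 (one-step recursion and discrete Gronwall).} Taking $\E[\cdot\mid\F^W_{t_n}]$ in the exact relation \eqref{BSDE-discrete}, and separately multiplying by $\Delta W_{t_n}$ before conditioning, and subtracting \eqref{schemes:YZ}, I obtain recursions for $e^Y_n := Y^k_{t_n} - Y^{k,N}_n$ and for $\bar Z^k_n - Z^{k,N}_n$ whose inhomogeneous terms are: the right-point quadrature error of $\int_{t_n}^{t_{n+1}}(f_x^\top Y^k_t + r_x^\top)\,dt$, which is $O(h^2)$ once conditioned (because $\E_n[Y^k_t]$ varies by $O(h)$ over the step, using the semimartingale structure of $Y^k$ and the bound \eqref{bound:YZ:2}), plus the driver-evaluation error from Step 1. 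Squaring, taking expectations, and using the conditional It\^o isometry to split $\E[|e^Y_{n+1}|^2] = \E[|\E_n e^Y_{n+1}|^2] + \E[|e^Y_{n+1}-\E_n e^Y_{n+1}|^2]$ turns the second piece (which is comparable to $h\,\E[|\bar Z^k_n - Z^{k,N}_n|^2]$ up to the Step-2 remainder) into a favorable gain term; Young's inequality absorbs the cross terms, and linearity in $Y$ yields an inequality of the form $\E[|e^Y_n|^2] \le (1+Ch)\E[|e^Y_{n+1}|^2] - h\,\E[|\bar Z^k_n - Z^{k,N}_n|^2] + C(1+|X_0|^2)h^2$ to which the discrete Gronwall inequality applies. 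Since $e^Y_N=0$, summing backward gives $\max_n\E[|e^Y_n|^2] + h\sum_n\E[|\bar Z^k_n - Z^{k,N}_n|^2]\le C(1+|X_0|^2)h$, and combining with Step 2 yields the $Z$-part of \eqref{YZ:Convergence}. \textbf{Step 4 ($\sup$-upgrade for $Y$).} On each subinterval write $Y^k_t - Y^{k,N}_n = (Y^k_t - Y^k_{t_n}) + e^Y_n$; the first term is controlled in $\sup$-norm by the Burkholder--Davis--Gundy inequality applied to $\int_{t_n}^{t} Z^k_s\, dW_s$ together with \eqref{bound:YZ:2}, giving $\E[\sup_{t_n\le t\le t_{n+1}}|Y^k_t - Y^k_{t_n}|^2]\le Ch$, which promotes the pointwise $Y$-bound of Step 3 to the stated estimate.

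\textbf{Main obstacle.} The technically heaviest ingredient is the $Z$-path-regularity estimate of Step 2: it is the only place that exploits the full $C_b^{2,2}$ regularity of the coefficients, and it requires either Malliavin calculus or the PDE/decoupling-field representation of $Z^k$. Everything else is bookkeeping, with the one recurring subtlety being to check at each step that every constant is generated solely by the uniform bounds of Assumption \ref{Assumptions}, hence independent of $u^k$.
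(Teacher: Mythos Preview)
Your proposal is correct and follows exactly the route the paper has in mind: the paper's own proof is simply a one-line appeal to Theorem~5.3.3 of \cite{Zhang_BSDE} together with the remark that the uniform boundedness and Lipschitz assumptions on $f,g,r$ make the constant independent of $u^k$. Your Steps~1--4 are precisely the standard ingredients of that theorem (forward Euler error, $Z$-path regularity, backward one-step recursion with discrete Gronwall, BDG upgrade for the $\sup$ in $Y$), and you have correctly isolated the $Z$-regularity estimate as the place where the $C_b^{2,2}$ hypothesis is actually used and the uniformity-in-$u^k$ as the only point requiring extra care.
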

\begin{proof}
The inequality \eqref{YZ:Convergence} is a standard extension of the regularity property, and the proof of the theorem can be derived following the proof for Theorem 5.3.3 in \cite{Zhang_BSDE}. The fact that $C$ does not depend on $u^k$ is due to the uniform boundedness and the Lipschitz assumptions of $f$, $g$ and $r$.

\end{proof}

The boundedness property of solutions of backward SDEs gives us the boundedness of $\nabla J_u$. Also, the convergence result \eqref{YZ:Convergence} gives the boundedness of $(Y^{k, N}_{n}, Z^{k, N}_{n})$, which makes $\nabla J^N_u$ bounded, i.e. 
\begin{equation}\label{bound:J'^N}
\sup_{u^k \in \mathcal{U}_N} \| \nabla J^N_u(u^k)\|_2 < C.
\end{equation}
As a consequence of the above discussions, we have the following lemma.
\begin{lem}\label{Error:Gradient}
Under Assumption \ref{Assumptions}, for any piece-wise constant estimated control $u^k \in \mathcal{U}_N$, the following estimation holds
\begin{equation}
\E\left[\|\nabla j^k_u(u^k) - \nabla J^N_u(u^k) \|_2^2 \right] \leq C N.
\end{equation}
\end{lem}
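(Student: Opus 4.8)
The plan is to reduce the claim to a crude second-moment bound on the sample-wise gradient and then invoke Lemma~\ref{Lem:boundedness}. As noted just before Lemma~\ref{Lem:boundedness}, the sample-wise gradient is an unbiased estimator of $\nabla J^N_u$ given $\mathcal{G}_k$: $\E^k[\nabla j^k_u(u^k_{t_n})] = \nabla J^N_u(u^k_{t_n})$ for each $n$. Since $\nabla J^N_u(u^k)$ is $\mathcal{G}_k$-measurable, the tower property together with expanding the square gives
\begin{equation*}
\E\big[\|\nabla j^k_u(u^k) - \nabla J^N_u(u^k)\|_2^2\big] = \E\big[\E^k\big[\|\nabla j^k_u(u^k)\|_2^2\big] - \|\nabla J^N_u(u^k)\|_2^2\big] \leq \E\big[\|\nabla j^k_u(u^k)\|_2^2\big],
\end{equation*}
so it suffices to prove $\E[\|\nabla j^k_u(u^k)\|_2^2] \leq CN$ with $C$ independent of $k$ and of $u^k \in \mathcal{U}_N$.

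Next I would unfold the piece-wise constant $L^2$ norm as $\|\nabla j^k_u(u^k)\|_2^2 = h\sum_{n=0}^{N-1}|\nabla j^k_u(u^k_{t_n})|^2$ and bound each summand pointwise. From the definition \eqref{gradient-sample} and the uniform boundedness of $f_u$, $g_u$, $r_u$ in Assumption~\ref{Assumptions}(b),
\begin{equation*}
|\nabla j^k_u(u^k_{t_n})|^2 \leq 3\big(|f_u(X^k_n,u^k_{t_n})^\top Y^k_n|^2 + |g_u(u^k_{t_n})^\top Z^k_n|^2 + |r_u(X^k_n,u^k_{t_n})^\top|^2\big) \leq C\big(|Y^k_n|^2 + |Z^k_n|^2 + 1\big).
\end{equation*}
Taking expectations and using Lemma~\ref{Lem:boundedness}, which gives $\E[|Y^k_n|^2]\leq C$ and $\E[|Z^k_n|^2]\leq CN$ uniformly in $n$ and in $u^k$, we obtain $\E[|\nabla j^k_u(u^k_{t_n})|^2]\leq CN$; summing over $n$ and using $hN=T$ yields $\E[\|\nabla j^k_u(u^k)\|_2^2] = h\sum_{n=0}^{N-1}\E[|\nabla j^k_u(u^k_{t_n})|^2]\leq hN\cdot CN = CTN = CN$.

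I do not expect a serious obstacle here; the entire rate is carried by the estimate $\E[|Z^k_n|^2]\leq CN$ of Lemma~\ref{Lem:boundedness}, which comes from $Z^k_n = Y^k_{n+1}\omega^k_n/h$ with $\omega^k_n\sim N(0,h)$, so that $\omega^k_n/h$ has variance $1/h = N/T$. This loss of regularity relative to the true martingale integrand $Z^k_t$ (for which $\sup_t \E[|Z^k_t|^2]\leq C$) is exactly what degrades the natural $O(1)$ bound to $O(N)$. The one point that needs care throughout is that $C$ must not depend on $k$ or on the control, which holds because the constants in Lemma~\ref{Lem:boundedness} and the sup-norm bounds on $f_u,g_u,r_u$ are uniform over $\mathcal{U}_N$. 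If one prefers to avoid the unbiasedness identity, the same conclusion follows from $\|\nabla j^k_u(u^k)-\nabla J^N_u(u^k)\|_2^2 \leq 2\|\nabla j^k_u(u^k)\|_2^2 + 2\|\nabla J^N_u(u^k)\|_2^2$, bounding the first term as above and the second by the uniform estimate \eqref{bound:J'^N}.
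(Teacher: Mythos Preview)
Your proof is correct and follows essentially the same approach as the paper: the decisive step is the bound $\E[|\nabla j^k_u(u^k_{t_n})|^2]\leq CN$ obtained from Lemma~\ref{Lem:boundedness} via the crude estimate $\E[|Z^k_n|^2]\leq CN$. Your primary reduction through the unbiasedness identity $\E^k[\nabla j^k_u]=\nabla J^N_u$ is slightly cleaner than the paper's, which uses exactly the triangle-inequality split you mention at the end together with the bound \eqref{bound:J'^N} on $\|\nabla J^N_u(u^k)\|_2^2$; the substance is identical.
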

\begin{proof}
Due to Lemma \ref{Lem:boundedness} and the boundedness assumptions for $f_u$, $g_u$ and $r_u$, we have that 
\begin{equation}\label{bound:j'}
|\nabla j^k_u(u^k_{t_n})|^2 \leq C \left( |Y^k_n|^2 + |Z^k_n|\right) \leq C N.
\end{equation}
Then, we can obtain
\begin{equation*}
\begin{aligned}
& \ \E\left[\|\nabla j^k_u(u^k) - \nabla J^N_u(u^k) \|_2^2 \right] \leq  2 \E\left[\|\nabla j^k_u(u^k) \|_2^2 \right] + 2\E\left[\|\nabla J^N_u(u^k) \|_2^2 \right] \\
\leq & \ C N + C h \sum_{n=0}^{N-1}\E\left[ |f_u(X_n^{k, N}, u^k_{t_n})^\top Y^{k, N}_n|^2 + |g_u(X_n^{k, N}, u^k_{t_n})^\top Z^{k, N}_n|^2 + |r_u(X_n^{k, N}, u^k_{t_n})^\top|^2\right] \\
\leq & \ C N + C h \sum_{n=0}^{N-1}\sup_{0 \leq n \leq N-1}\E\left[ |Y^{k, N}_n|^2 + |Z^{k, N}_n|^2 \right] + C \\
\leq & \ C N + C,
\end{aligned}
\end{equation*}
where $C>0$ is a generic constant that is independent from $N$. Hence we can get the desired result of the lemma from the above analysis. 
\end{proof}

\subsection{Convergence analysis: strongly convex cost functional}\label{Section: Convex}
Here, we assume that the cost functional $J$ defined in \eqref{eq:cost_functional} is strongly convex in the following sense: there exists some constant $\lambda > 0$  such that for any control terms $u, v \in \mathcal{U}$, \begin{equation}\label{Convexity_Assumption}
\langle \nabla J_u(u) - \nabla J_u(v), u - v \rangle \geq \lambda \|u - v\|_2^2.
\end{equation}
By the Assumption \ref{Assumptions},   we have the  following smoothness result for $J$ with a positive constant $C_L$:
$$\int_0^T |\nabla J_u(u_t) - \nabla J_u(v_t)|^2 dt \leq C_L \int_0^T |u_t - v_t|^2 dt,$$
or equivalently,
\begin{equation}\label{Convexity}
\|\nabla J_u(u) - \nabla J_u(v)\|_2^2 \leq C_L\|u - v\|_2^2.
\end{equation}

Before stating  the main convergence theorem, we need  an estimate  about the error between the true gradient $\nabla J_u$ and the piece-wisely approximated gradient $\nabla J_u^N$.
\begin{lem}\label{lemma-convex}
Assume that Assumption \ref{Assumptions} holds and $f, g \in C_b^4$, $u^k \in \mathcal{U}_N$. Then there exists a constant $C > 0$ such that 
\begin{equation}\label{estimate:J'}
\sup_{u^k \in \mathcal{U}_N}\|\nabla J_u(u^k) - \nabla J^N_u(u^k)  \|_2^2 \leq \f{C}{N}.
\end{equation}
\end{lem}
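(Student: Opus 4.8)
The plan is to bound the gradient error by controlling two ingredients: the discretization error of the backward SDE and the error coming from replacing the state process $X_t$ and the integral $\int_0^T (\cdot)\, dt$ in \eqref{gradient-continuous} by their discrete, right-point counterparts in \eqref{gradient-discrete}. Writing out $\nabla J_u(u^k_{t_n})$ and $\nabla J^N_u(u^k_{t_n})$ explicitly, their difference at a node $t_n$ is
\[
\E\big[f_u(X_{t_n},u^k_{t_n})^\top Y_{t_n} - f_u(X^{k,N}_n,u^k_{t_n})^\top Y^{k,N}_n\big]
+\E\big[g_u(u^k_{t_n})^\top(Z_{t_n}-Z^{k,N}_n)\big]
+\E\big[r_u(X_{t_n},u^k_{t_n})^\top - r_u(X^{k,N}_n,u^k_{t_n})^\top\big],
\]
and, since $\nabla J_u$ is piecewise evaluated to build $\nabla J^N_u$ over $[t_n,t_{n+1})$, I also need to absorb the oscillation of $t\mapsto \nabla J_u(u^k_t)$ across each subinterval; but because $u^k$ is piecewise constant on $\Pi_N$, this oscillation is itself only the oscillation of $t\mapsto \E[f_u(X_t,u^k_{t_n})^\top Y_t+\cdots]$ on $[t_n,t_{n+1})$, which is $O(\sqrt h)$ in $L^2$ and hence $O(h)$ after squaring and summing — this is a standard path-regularity estimate for $X$, $Y$, $Z$ under Assumption \ref{Assumptions}.

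First I would use the boundedness and Lipschitz hypotheses on $f_u$, $g_u$, $r_u$ (Assumption \ref{Assumptions}(b)) together with Cauchy–Schwarz to dominate each term by $\E|X_{t_n}-X^{k,N}_n|^2$, $\E|Y_{t_n}-Y^{k,N}_n|^2$, $\E\int_{t_n}^{t_{n+1}}|Z_t-Z^{k,N}_n|^2\,dt$, plus cross terms handled with Young's inequality; the boundedness of $Y^{k,N}_n$, $Z^{k,N}_n$ from \eqref{bound:YZ:2} and Proposition \ref{Convergence-Classic} keeps the coefficient–mismatch cross terms controlled. The Euler–Maruyama bound $\max_n\E|X_{t_n}-X^{k,N}_n|^2\le C(1+|X_0|^2)h$ is classical under Assumption \ref{Assumptions}(a)(b)(d), and the backward-SDE rate $\max_n\E|Y_{t_n}-Y^{k,N}_n|^2+\sum_n\E\int_{t_n}^{t_{n+1}}|Z_t-Z^{k,N}_n|^2\,dt\le C(1+|X_0|^2)h$ is exactly \eqref{YZ:Convergence} from Proposition \ref{Convergence-Classic}. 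Summing these node-wise estimates, multiplying by $h$ (the quadrature weight in the discrete $L^2$ norm), and recalling $Nh=T$, I get $\|\nabla J_u(u^k)-\nabla J^N_u(u^k)\|_2^2\le C h = C/N$, with $C$ independent of $u^k$ precisely because all constants above are uniform in $u^k\in\mathcal U_N$ (this is the content of the final sentence of Proposition \ref{Convergence-Classic}). The extra regularity $f,g\in C_b^4$ is used to upgrade the order-$1/2$ path regularity to the genuine order-$1$ (in $h$) discretization estimates for $Y,Z$ — in particular to control the right-point quadrature error $R^Y_n$ and the left-point error $R^Z_n$ and the $Z$-path regularity at the required order.

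The main obstacle I expect is the $Z$-term. Unlike $Y$, the process $Z_t$ is only $L^2(dt\otim\dbP)$, has no uniform pointwise-in-$t$ control comparable to $Y$, and the sample-wise object $Z^k_n=Y^k_{n+1}\omega^k_n/h$ is badly behaved (Lemma \ref{Lem:boundedness} only gives $\E[(Z^k_n)^2]\le CN$); however, here we compare with $Z^{k,N}_n$, not $Z^k_n$, and $Z^{k,N}_n=\E^X_n[Y^{k,N}_{n+1}\Delta W_{t_n}/h]$ is the $L^2$-projection-type approximation whose convergence at rate $h$ is quantified in \eqref{YZ:Convergence}. So the real work is to make sure the mismatch $g_u(u^k_{t_n})^\top(Z_t - Z^{k,N}_n)$ is integrated against $dt$ on $[t_n,t_{n+1}]$ and bounded using the $\sum_n\E\int_{t_n}^{t_{n+1}}|Z_t-Z^{k,N}_n|^2\,dt$ term, rather than a pointwise bound — this is why the statement is an $L^2([0,T])$ estimate and not a sup-norm one. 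A secondary technical point is verifying that the oscillation/path-regularity contributions combine additively with the discretization contributions without producing a constant that blows up with $N$; this is routine but needs the $C_b^4$ smoothness and the uniform bounds, and should be stated carefully rather than glossed.
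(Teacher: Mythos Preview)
Your proposal is correct and follows essentially the same decomposition as the paper: split $\|\nabla J_u(u^k)-\nabla J^N_u(u^k)\|_2^2$ into an oscillation piece $\sum_n\int_{t_n}^{t_{n+1}}(\E[\phi^k_t-\phi^k_{t_n}])^2\,dt$ and a node-discretization piece $\sum_n h\,(\E[\phi^k_{t_n}-\phi^k_n])^2$, then bound each by $C/N$ using path regularity of $(X,Y,Z)$ and Proposition~\ref{Convergence-Classic} respectively. Your handling of the $Z$-term through the integrated $L^2(dt\otimes d\dbP)$ estimate rather than a pointwise bound is exactly the right observation, and your write-up is in fact more explicit than the paper's own (quite terse) argument.
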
 
\begin{proof}  Denote
$$
\begin{aligned}
\phi^k_{t} := & \ f_u(X^{k}_{t}, u^{k}_t)^\top Y^k_{t} + g_u(X^{k}_{t}, u^{k}_t)^\top Z^k_{t} +r_u(X^{k}_{t}, u^{k}_t)^\top, \\
\phi^k_{n} := & \ f_u(X^{k, N}_{n}, u^{k}_{t_n})^\top Y^{k, N}_{n} + g_u(X^{k, N}_{n}, u^{k}_{t_n})^\top Z^{k, N}_{n} +r_u(X^{k, N}_{n}, u^{k}_{t_n})^\top. 
\end{aligned}
$$
We have that 
$$
\begin{aligned}
& \int_{0}^{T} ( \nabla J_u(u^k_t) - \nabla J^N_u(u^k_t) )^2 dt\\
\leq & \ 2 \sum_{n=0}^{N-1}\int_{t_n}^{t_{n+1}} \left[ (\nabla J_u(u^k_t) - \nabla J_u(u^k_{t_n}))^2 + (\nabla J_u(u^k_{t_n}) - \nabla J^N_u(t_n, u^k_{t_n}))^2 \right] dt \\
= & \ 2 \sum_{n=0}^{N-1} \int_{t_n}^{t_{n+1}}(\E[\phi^k_t - \phi^k_{t_n}])^2 dt + 2 \sum_{n=0}^{N-1} \int_{t_n}^{t_{n+1}} (\E[\phi^k_{t_n} - \phi^k_{n}])^2 dt \leq \f{C}{N}.
\end{aligned}
$$ 
\end{proof}

Let $u^{*, N}$ be the optimal control for the stochastic optimal control problem \eqref{control-cost} found in the subset  $\mathcal{U}_N$ of the admissible control set $\mathcal{U}$. 
Then, we have the following estimate. 
\begin{lem}
The following inequality holds: 
\begin{equation}\label{Order:J'}
\| \nabla J_u(u^{*, N}) - \nabla J_u(u^*)\|_2^2 \leq \f{C}{N}.
\end{equation}
\end{lem}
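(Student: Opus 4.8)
The goal is to bound $\|\nabla J_u(u^{*,N}) - \nabla J_u(u^*)\|_2^2$ by $C/N$. The natural plan is to leverage the convexity/smoothness structure together with the fact that $u^{*,N}$ is the minimizer of $J$ restricted to $\mathcal{U}_N$, and that $\mathcal{U}_N$ approximates $\mathcal{U}$ with piece-wise constants to order $O(1/\sqrt{N})$ in the $L^2$ sense. By the smoothness estimate \eqref{Convexity}, it suffices to show $\|u^{*,N} - u^*\|_2^2 \le C/N$; then multiplying by $C_L$ gives the claim. So the real content is a rate of approximation of the continuous optimal control by the discrete one.

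Here is how I would carry it out. First, by strong convexity \eqref{Convexity_Assumption} applied at $u = u^{*,N}$ and $v = u^*$, and using that $\nabla J_u(u^*) = 0$ on the interior (or more carefully, the variational inequality $\langle \nabla J_u(u^*), u^{*,N} - u^*\rangle \ge 0$ since $u^*$ minimizes over $\mathcal{U} \supseteq \mathcal{U}_N$), we get
\begin{equation*}
\lambda \|u^{*,N} - u^*\|_2^2 \le \langle \nabla J_u(u^{*,N}), u^{*,N} - u^*\rangle.
\end{equation*}
Next I would split this using the optimality of $u^{*,N}$ in $\mathcal{U}_N$: for any $v^N \in \mathcal{U}_N$ we have $\langle \nabla J_u(u^{*,N}), u^{*,N} - v^N\rangle \le 0$, so
\begin{equation*}
\langle \nabla J_u(u^{*,N}), u^{*,N} - u^*\rangle \le \langle \nabla J_u(u^{*,N}), v^N - u^*\rangle \le \|\nabla J_u(u^{*,N})\|_2 \, \|v^N - u^*\|_2.
\end{equation*}
Choosing $v^N$ to be the natural $L^2$-projection of $u^*$ onto $\mathcal{C}_N$ (the piece-wise constant averaging), standard approximation theory gives $\|v^N - u^*\|_2 \le C/\sqrt{N}$ provided $u^*$ has enough regularity; combined with the uniform bound on $\|\nabla J_u\|_2$ coming from the boundedness of $(Y,Z)$ (cf. \eqref{bound:YZ:1}), this yields $\lambda\|u^{*,N}-u^*\|_2^2 \le C/\sqrt{N}$. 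That only gives $1/\sqrt N$, so to get the sharper $C/N$ I would instead keep the product and apply Young's inequality: $\|\nabla J_u(u^{*,N})\|_2 \|v^N - u^*\|_2 \le \tfrac{\lambda}{2}\|u^{*,N}-u^*\|_2^2$-type absorption is not available directly, so one exploits a second-order (quadratic) expansion of $J$ around $u^*$. Concretely, since $J$ is strongly convex and smooth, $J(u^{*,N}) - J(u^*) \le \tfrac{C_L}{2}\|u^{*,N}-u^*\|_2^2$ and also $J(u^{*,N}) - J(u^*) \le J(v^N) - J(u^*) \le \tfrac{C_L}{2}\|v^N - u^*\|_2^2 \le C/N$; then strong convexity gives $\tfrac{\lambda}{2}\|u^{*,N}-u^*\|_2^2 \le J(u^{*,N}) - J(u^*) \le C/N$, hence $\|u^{*,N}-u^*\|_2^2 \le C/N$. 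Finally apply \eqref{Convexity} to conclude.

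The main obstacle I anticipate is justifying the $O(1/\sqrt N)$ (so that its square is $O(1/N)$) approximation bound $\|v^N - u^*\|_2 \le C/\sqrt N$ for the projection of $u^*$ onto piece-wise constants; this requires knowing that the continuous optimal control $u^*$ is sufficiently regular (e.g., in $H^1$ or at least Hölder-$1/2$ in time), which in turn follows from regularity of the adjoint BSDE solution and the structure \eqref{gradient-continuous} of $\nabla J_u$ together with the first-order optimality condition $\nabla J_u(u^*) = 0$ on the admissible set. A secondary subtlety is handling the projection operator $\mathcal{P}_{\mathcal{U}}$ / the variational inequality correctly when $u^*$ lies on the boundary of $\mathcal{U}$, so that one uses $\langle \nabla J_u(u^*), u^{*,N} - u^*\rangle \ge 0$ rather than $\nabla J_u(u^*) = 0$; but since $u^{*,N} \in \mathcal{U}_N \subseteq \mathcal{U}$ this inequality is exactly the first-order condition for $u^*$ and causes no real difficulty. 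I would also remark that the constant $C$ is uniform because of the uniform boundedness and Lipschitz assumptions in Assumption \ref{Assumptions}, as already invoked in Proposition \ref{Convergence-Classic} and Lemma \ref{lemma-convex}.
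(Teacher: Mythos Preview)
Your final argument is correct and follows essentially the same route as the paper: introduce the projection $\bar u^N$ (your $v^N$) of $u^*$ onto $\mathcal{U}_N$, bound $J(u^{*,N})-J(u^*)\le J(\bar u^N)-J(u^*)\le C/N$, convert this to $\|u^{*,N}-u^*\|_2^2\le C/N$ via strong convexity and $\nabla J_u(u^*)=0$, and finish with the Lipschitz estimate \eqref{Convexity}. The one substantive difference is in how the bound $J(\bar u^N)-J(u^*)\le C/N$ is obtained: the paper uses the first-order estimate $J(\bar u^N)-J(u^*)\le \sup_u\|\nabla J_u(u)\|_2\,\|\bar u^N-u^*\|_2$ together with the claim $\|\bar u^N-u^*\|_2\le C/N$ (which implicitly needs Lipschitz/$H^1$ regularity of $u^*$ in $t$), whereas you use the second-order descent lemma $J(v^N)-J(u^*)\le \tfrac{\sqrt{C_L}}{2}\|v^N-u^*\|_2^2$ and hence only need the weaker approximation $\|v^N-u^*\|_2\le C/\sqrt N$. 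Your version therefore requires less time-regularity of $u^*$; otherwise the two proofs coincide. Your initial variational-inequality attempt, which you correctly abandon, is not needed and can be dropped.
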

\begin{proof}
Let $\bar{u}^N$ be the projection of $u^*$ onto $\mathcal{U}_N$, i.e. $\bar{u}^N = \mathcal{P}_{\mathcal{U}_N}(u^*) = \arg\min_{u^N \in \mathcal{U}_N} \|u^N - u^* \|_2$. Due to the boundedness of $u^*$, we have 
\begin{equation*}
\| \bar{u}^N - u^* \|_2 \leq \f{C}{N}.
\end{equation*}
Since the solutions of the backward SDEs are bounded, $\nabla J_u$ is also bounded. Thus \begin{equation*}
\begin{aligned}
J(\bar{u}^N) - J(u^*) = & \ \int_0^1 \langle \nabla J_u\big(u^* + \epsilon (\bar{u}^N - u^*) \big),  \bar{u}^N - u^*)\rangle d \epsilon \\
\leq & \sup_{u \in L^2} \| \nabla J_u (u)\|_2 \|\bar{u}^N - u^*\|_2 \leq C \|\bar{u}^N - u^*\|_2 \leq \f{C}{N}.
\end{aligned}
\end{equation*}
Hence 
$$J(u^{*, N}) - J(u^*) \leq J(\bar{u}^N) - J(u^*) \leq \f{C}{N}. $$
By the strong convex assumption, we have
$$\langle \nabla J_u(u^*), u^{*, N} - u^*\rangle + \f{\lambda}{2}  \| u^{*, N} - u^* \|_2^2 \leq J(u^{*, N}) - J(u^*).$$
Since $\nabla J_u(u_t^*) = 0$, the above inequality leads to
\begin{equation}\label{u^N-u^*}
 \| u^{*, N} - u^* \|_2^2 \leq \f{C}{N}.
 \end{equation}
The desired result of the lemma is obtained by the above estimate and the convexity assumption  \eqref{Convexity}. 
\end{proof}

Since  $\nabla J_u(u_t^*) = 0$,  as a direct consequence of  \eqref{Order:J'}, we have 
\begin{equation}\label{J'<C/N}
\| \nabla J_u(u^{*, N})\|_2^2 \leq \f{C}{N}.
\end{equation}

Now we  are ready to prove the main convergence results of our algorithm under the convexity assumption. First  we  estimate the error between the exact solution of the optimal control and the optimal control in the piece-wise constant subset $\mathcal{U}_N$ of the admissible set $\mathcal{U}$.
\vspace{0.25em}

\begin{thm}\label{Thm: convex_convergence}
Assume all the assumptions in Lemma \ref{lemma-convex} and the convexity assumption are true. Let $\eta_k = \f{\theta}{k+M}$ for some constants $\theta$ and $M$ such that $\lambda \theta - 4 C_L \theta^2 /(1 + M) > 2$. Also, let $\{u^k\}_k$ be the sequence of estimated optimal control obtained by the SGD optimization scheme \eqref{SGD-u}. Then, for large enough $K$, the following inequality holds
\begin{equation}\label{Convex:convergence}
\E\left[\|u^{K+1} - u^{*, N}\|_2^2\right] \leq C \left( \f{N}{K} + \f{1}{N} \right).
\end{equation}
\end{thm}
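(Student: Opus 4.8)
The plan is to run a standard SGD-for-strongly-convex-objective argument, but carried out at the level of the \emph{approximated} gradient $\nabla J_u^N$ with the true approximated minimizer $u^{*,N}$, and then track the extra error terms coming from (i) the variance of the sample-wise gradient $\nabla j_u^k$ and (ii) the discretization bias between $\nabla J_u$ and $\nabla J_u^N$. First I would expand the one-step recursion: since the projection $\mathcal{P}_{\mathcal{U}_N}$ is nonexpansive and $u^{*,N}\in\mathcal{U}_N$,
\begin{equation*}
\|u^{k+1}-u^{*,N}\|_2^2 \le \|u^k - \eta_k \nabla j_u^k(u^k) - u^{*,N}\|_2^2
= \|u^k-u^{*,N}\|_2^2 - 2\eta_k\langle \nabla j_u^k(u^k), u^k-u^{*,N}\rangle + \eta_k^2\|\nabla j_u^k(u^k)\|_2^2.
\end{equation*}
Then I would take $\E^k[\cdot]=\E[\cdot\mid\mathcal{G}_k]$. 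Because $u^k$ is $\mathcal{G}_k$-measurable and, by Proposition~\ref{Y^k=Y^N} and the unbiasedness remark following it, $\E^k[\nabla j_u^k(u^k)] = \nabla J_u^N(u^k)$, the cross term becomes $-2\eta_k\langle \nabla J_u^N(u^k), u^k-u^{*,N}\rangle$; for the quadratic term I would use Lemma~\ref{Error:Gradient} together with the boundedness \eqref{bound:J'^N} to bound $\E^k[\|\nabla j_u^k(u^k)\|_2^2]\le C N$.

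The heart of the argument is turning $-\langle \nabla J_u^N(u^k), u^k-u^{*,N}\rangle$ into a contraction. I would write $\nabla J_u^N(u^k) = \nabla J_u(u^k) + \big(\nabla J_u^N(u^k)-\nabla J_u(u^k)\big)$ and similarly insert $\nabla J_u(u^{*,N})$, so that
\begin{equation*}
\langle \nabla J_u^N(u^k), u^k-u^{*,N}\rangle = \langle \nabla J_u(u^k)-\nabla J_u(u^{*,N}), u^k-u^{*,N}\rangle + \langle \nabla J_u(u^{*,N}), u^k-u^{*,N}\rangle + \langle \nabla J_u^N(u^k)-\nabla J_u(u^k), u^k-u^{*,N}\rangle.
\end{equation*}
The first term is $\ge \lambda\|u^k-u^{*,N}\|_2^2$ by strong convexity \eqref{Convexity_Assumption}. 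The second and third are controlled by Cauchy--Schwarz plus Young's inequality: $\|\nabla J_u(u^{*,N})\|_2^2 \le C/N$ by \eqref{J'<C/N}, and $\|\nabla J_u^N(u^k)-\nabla J_u(u^k)\|_2^2 \le C/N$ uniformly by Lemma~\ref{lemma-convex}. Splitting off an $\e\|u^k-u^{*,N}\|_2^2$ from each via Young (with $\e$ small, say $\e = \lambda/4$) leaves a net coefficient like $(\lambda - \tfrac{\lambda}{2})=\tfrac{\lambda}{2}$ on $\|u^k-u^{*,N}\|_2^2$ and a residual $O(1/N)$ term. Collecting everything and taking full expectation, with $a_k := \E[\|u^k-u^{*,N}\|_2^2]$, I get a recursion of the shape
\begin{equation*}
a_{k+1} \le (1 - \lambda\eta_k + c\eta_k^2)\, a_k + C\eta_k^2 N + \frac{C\eta_k}{N}.
\end{equation*}

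With $\eta_k=\theta/(k+M)$, the $c\eta_k^2 a_k$ term is absorbed (this is where the hypothesis $\lambda\theta - 4C_L\theta^2/(1+M) > 2$ is used, guaranteeing the effective contraction factor behaves like $1 - 2/(k+M)$ for all $k\ge 0$), so the recursion reduces to the classical form $a_{k+1}\le \big(1-\tfrac{2}{k+M}\big)a_k + \tfrac{C\theta^2 N}{(k+M)^2} + \tfrac{C\theta}{N(k+M)}$. The last step is the standard induction/telescoping lemma for such recursions: multiplying through by $(k+M)^2$ (or $(k+M)$) and summing, the $N/(k+M)^2$ source contributes $O(N/K)$ to $a_{K+1}$ and the $1/(N(k+M))$ source contributes $O(1/N)$ (the harmonic-type sum against the $(k+M)^{-2}$ decay), which is exactly the claimed bound $C(N/K + 1/N)$. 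I would either cite a standard SGD recursion lemma or prove it inline by induction on $k$ with an ansatz $a_k \le A\frac{N}{k+M} + \frac{B}{N}$. The main obstacle — and the place where care is genuinely needed rather than it being routine — is bookkeeping the interplay of the three error scales ($\eta_k^2 N$ from sample variance, $\eta_k/N$ from discretization bias, and the $\eta_k^2$ self-term needing the $\theta, M$ condition to be dominated), and making sure the constants in Lemmas~\ref{Error:Gradient}, \ref{lemma-convex} and in \eqref{J'<C/N} are genuinely uniform in $u^k\in\mathcal{U}_N$ and in $N$ so that the final constant $C$ does not secretly depend on $N$; the measurability step (that conditioning on $\mathcal{G}_k$ kills the cross term) is clean thanks to Proposition~\ref{Y^k=Y^N}, so that is not where the difficulty lies.
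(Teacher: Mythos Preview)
Your proposal is correct and follows essentially the same route as the paper: a one-step SGD contraction inequality obtained via projection nonexpansivity, the unbiasedness $\E^k[\nabla j_u^k(u^k)]=\nabla J_u^N(u^k)$, strong convexity for the cross term, Lemmas~\ref{Error:Gradient} and~\ref{lemma-convex} and \eqref{J'<C/N} for the residuals, and then telescoping the recursion with $\eta_k=\theta/(k+M)$. The only organizational difference is that the paper separates $\eta_k\nabla J_u(u^{*,N})$ at the outset via a $(1+\epsilon)$--$(1+1/\epsilon)$ Young split and keeps $\|\nabla J_u^N(u^k)-\nabla J_u(u^{*,N})\|_2^2$ in the quadratic term (whence, through \eqref{Convexity}, an explicit $4C_L\eta_k^2\|u^k-u^{*,N}\|_2^2$ contribution and the precise role of the constant $4C_L$ in the hypothesis); your direct bound $\E^k[\|\nabla j_u^k(u^k)\|_2^2]\le CN$ sidesteps that term, so in your version the $c\eta_k^2 a_k$ piece does not actually arise and only the weaker consequence $\lambda\theta>2$ of the hypothesis is needed.
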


\begin{proof}
Recall that $u^{*, N}$ is the optimal control found in the control set $\mathcal{U}_N$. Hence 
\begin{equation}\label{u*=u*}
u^{*, N} = \mathcal{P}_{\mathcal{U}_N} (u^{*, N}) = \mathcal{P}_{\mathcal{U}_N} \left(u^{*, N} - \eta_k \nabla J_u(u^{*, N}) + \eta_k \nabla J_u(u^{*, N})\right). 
\end{equation}
Then, we subtract  \eqref{u*=u*} from the SGD optimization scheme \eqref{SGD-u} to get 
$$ \| u^{k+1} - u^{*, N} \|_2^2=\| \mathcal{P}_{\mathcal{U}_N}\left( (u^k - u^{*, N}) - \eta_k \big(\nabla j^k_u(u^k_t) - \nabla J_u(u_t^{*, N}) \big) - \eta_k \nabla J_u(u_t^{*, N})  \right) \|_2^2 $$ 
By taking conditional expectation $\E^k[\cdot]$  to the above equation and then applying the Young's inequality, we obtain
\begin{equation}\label{Analysis-convex}
\begin{aligned}
& \E^k\big[ \| u^{k+1} - u^{*, N} \|_2^2 \big] \\
\leq & \ (1 + \epsilon)\E^k\left[ \| (u^k - u^{*, N}) - \eta_k \big(\nabla j^k_u(u^k)  - \nabla J_u(u^{*, N}) \big) \|_2^2 \right] + (1 + \f{1}{\epsilon}) \eta_k^2 \E^k[\|\nabla J_u(u^{*, N})  \|^2_2]
\\ \leq & \ (1 + \epsilon)\Big(  \| (u^k - u^{*, N})\|_2^2 - 2 \eta_k  \langle \E^k\big[ \nabla j^k_u(u^k)  - \nabla J_u(u^{*, N})\big] , u^k - u^{*, N} \rangle \\
& \quad + \eta_k^2 \E^k\big[ \| \nabla j^k_u(u^k)  - \nabla J^N_u (u^k) + \nabla J^N_u (u^k) - \nabla J_u(u^{*, N})\|_2^2  \big] \Big) \\
& \qquad + (1 + \f{1}{\epsilon}) \eta_k^2 \E^k[\|\nabla J_u(u^{*, N})  \|^2_2].
\end{aligned}
\end{equation}
From the convexity assumption \eqref{Convexity_Assumption} and Lemma \ref{lemma-convex}, we can deduce by using the Young's inequality with $\lambda/2$ and the fact  that $u^k$ is $\mathcal{G}_k$ measurable to get
\begin{equation}\label{Breakdown-1}
\begin{aligned}
& \ - \langle \E^k\big[ \nabla j^k_u(u^k)  - \nabla J_u(u^{*, N})\big] , u^k - u^{*, N} \rangle = \ - \langle  \nabla J^N_u(u^k)  - \nabla J_u(u^{*, N}), u^k - u^{*, N} \rangle \\
= & \  - \langle  \nabla J^N_u(u^k)  - \nabla J_u(u^{k}), u^k - u^{*, N} \rangle -  \langle \nabla J_u(u^{k})  - \nabla J_u(u^{*, N}), u^k - u^{*, N} \rangle \\
\leq & \f{2 \| \nabla J^N_u(t, u^k)  - \nabla J_u(u^{k}) \|_2^2}{\lambda} + \f{\lambda}{2} \|u^k - u^{*, N}\|_2^2 - \lambda \|u^k - u^{*, N}\|_2^2 \\
\leq &  \f{2}{\lambda}\f{C}{N} - \f{\lambda}{2} \|u^k - u^{*, N}\|_2^2.
\end{aligned}
\end{equation}
Moreover, from Lemma \ref{Error:Gradient}, Lemma \ref{lemma-convex} and the convexity assumption \eqref{Convexity}, we have 
\begin{equation}\label{Breakdown-2}
\begin{aligned}
& \ \E^k\big[ \| \nabla j^k_u(u^k)  - \nabla J^N_u (u^k) + \nabla J^N_u (u^k) - \nabla J_u(u^{*, N})\|_2^2  \big] \\
\leq & \ 2 \E^k\big[ \| \nabla J^N_u (u^k) - \nabla J_u(u^{*, N})\|_2^2  \big]  + C N \\
\leq & \ 4 \Big( \E^k\big[ \| \nabla J^N_u (u^k) - \nabla J_u(u^{k})\|_2^2  \big]  + \E^k\big[ \| \nabla J_u(u^{k}) - \nabla J_u(u^{*, N})\|_2^2  \big]  \Big) + C N \\
\leq & \ 4 \Big( \f{C}{N} + C_L \|u^k - u^{*, N} \|_2^2 \Big) + C N,
 \end{aligned}
\end{equation}
where we use $C$ to denote a generic constant independent of $k$, $N$ and controls. 

Inserting Eqs. \eqref{Breakdown-1}-\eqref{Breakdown-2} in  \eqref{Analysis-convex} and applying  \eqref{J'<C/N}, we obtain
\begin{equation}\label{Analysis-convex-update}
\begin{aligned}
& \E^k\big[ \| u^{k+1} - u^{*, N} \|_2^2 \big] \\
\leq & \ (1 + \epsilon)\Big(  \| (u^k - u^{*, N})\|_2^2 + \f{4}{\lambda} \eta_k  \f{C}{N} - \lambda \eta_k \|u^k - u^{*, N}\|_2^2 \\
& \ \ + 4 \eta_k^2 \big( \f{C}{N} + C_L \|u^k - u^{*, N} \|_2^2 \big) + 4 \eta_k^2 C N  \Big) + (1 + \f{1}{\epsilon}) \eta_k^2 \f{C}{N} \\
= & \ (1 + \epsilon) \Big((1 - c_k\eta_k) \| u^k - u^{*, N} \|_2^2 + 4 \eta_k^2  C N + (\f{4}{\lambda} \eta_k + 4 \eta_k^2 ) \f{C}{N}\Big) + (1 + \f{1}{\epsilon}) \eta_k^2 \f{C}{N},
\end{aligned}
\end{equation}
where $c_k := \lambda - 4 C_L \eta_k$.

Let $\tilde{\eta}_k = \f{1}{k + M}$. We can find $\theta$ and $M$ such that 
$$2 c : = \lambda \theta - 4 C_L \f{\theta^2}{1 + M} > 2,$$
and we have that when $k$ is large enough, $2 c\tilde{\eta}_k \geq c_k \eta_k$  for  $\eta_k = \f{\theta}{k+M}$. By picking $\epsilon = c \tilde{\eta}_k$ in  \eqref{Analysis-convex-update}, we have
\begin{equation*}
\begin{aligned}
& \E^k\big[ \| u^{k+1} - u^{*, N} \|_2^2 \big] \\
\leq & \ (1 + c \tilde{\eta}_k) \Big((1 - 2 c \tilde{\eta}_k)\| u^k_t - u^{*, N} \|_2^2 + C \big(  \tilde{\eta}^2_k N + \f{\tilde{\eta}_k}{N} \big)\Big) + (1 + \f{1}{c \tilde{\eta}_k}) \tilde{\eta}_k^2 \f{C}{N}\\
\leq & \ (1 - c \tilde{\eta}_k) \| u^k_t - u^{*, N} \|_2^2 + C \tilde{\eta}_k^2 N + C \f{\tilde{\eta}_k}{N}.
\end{aligned}
\end{equation*}
Next, we take expectation $\E[\cdot]$ to both sides of the above estimate and apply it recursively from $k=0$ to $k = K$ to get
\begin{equation*}
\begin{aligned}
\E\big[ \| u^{K+1} - u^{*, N} \|_2^2 \big] \leq & \ \prod_{k=0}^{K} (1 - c \tilde{\eta}_k)\E\big[ \| u^{0} - u^{*, N} \|_2^2 \big] + \left( \sum_{m=1}^{K} \tilde{\eta}_{m-1} \prod_{k=m}^{K} (1 - c \tilde{\eta}_k) \right) \f{C}{N} \\
& \quad + \f{C \tilde{\eta}_k}{N} + \left( \sum_{m=1}^{K} \tilde{\eta}^2_{m-1} \prod_{k=m}^{K} (1 - c \tilde{\eta}_k) \right) C N \\
\leq & (K + M)^{-c} \| u^{0} - u^{*, N} \|_2^2 + CN \left( (K+M)^{-1} - \f{(1 + M)^{c-1}}{(K+M)^c} \right) + \f{C}{N}.
\end{aligned}
\end{equation*}
Since $c>1$ and $\prod_{k=m}^{K} (1 - c \tilde{\eta}_k) \sim O\big((K/m)^{-c}\big)$, the above estimate gives us
$$\E\big[ \| u^{K+1} - u^{*, N} \|_2^2 \big] \leq \f{CN}{K+M} + \f{C}{N}\leq  \ds C \left(\f{N}{K} + \f{1}{N}\right).$$

\end{proof}

Note that the estimate  \eqref{Convex:convergence} provides the convergence between the estimated control $u^{K+1}$ and the optimal control found in the subspace $\mathcal{U}_N$. The next theorem, which is the main result of this section,  gives the convergence between $u^{K+1}$ and the exact optimal control $u^* \in \mathcal{U}$.
\begin{thm}\label{thm:Convergence:Convex}
Assume that all the assumptions hold in Theorem \ref{Thm: convex_convergence} and assume the optimal control $u^*$ is bounded. Then, for large enough $K$, we have the following convergence result:
\begin{equation}\label{Convex:Final}
\E\big[ \| u^{K+1} - u^{*} \|_2^2 \big] \leq C \left( \f{N}{K} + \f{1}{N}\right).
\end{equation}
\end{thm}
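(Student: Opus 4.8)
The plan is to derive \eqref{Convex:Final} by combining two estimates that are already in hand: the convergence bound \eqref{Convex:convergence} of Theorem \ref{Thm: convex_convergence}, which controls $\E[\|u^{K+1} - u^{*, N}\|_2^2]$, the distance between the SGD iterate and the optimal control $u^{*, N}$ found inside the piece-wise constant subset $\mathcal{U}_N$; and the approximation estimate \eqref{u^N-u^*}, $\|u^{*, N} - u^*\|_2^2 \le C/N$, which bounds the distance between $u^{*, N}$ and the true optimal control $u^* \in \mathcal{U}$. Since both ingredients are established, the remaining work is a single application of the triangle inequality together with the elementary inequality $(a+b)^2 \le 2a^2 + 2b^2$.

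Concretely, first I would write
$$\E\big[\|u^{K+1} - u^*\|_2^2\big] \le 2\,\E\big[\|u^{K+1} - u^{*, N}\|_2^2\big] + 2\,\|u^{*, N} - u^*\|_2^2.$$
For the first term I invoke \eqref{Convex:convergence}, valid for large enough $K$ under the step-size schedule $\eta_k = \theta/(k+M)$ with $\lambda\theta - 4C_L\theta^2/(1+M) > 2$, which contributes a term of order $N/K + 1/N$. For the second term I use \eqref{u^N-u^*}, which contributes an additional $C/N$; recall this estimate was obtained from the strong convexity of $J$, the stationarity $\nabla J_u(u^*_t) = 0$, and the bound $\|\bar u^N - u^*\|_2 \le C/N$ for the projection $\bar u^N = \mathcal{P}_{\mathcal{U}_N}(u^*)$ of the bounded optimal control. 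Summing the two contributions and absorbing the extra $1/N$ term into the generic constant in $C(N/K + 1/N)$ yields \eqref{Convex:Final}. Interpretively, the $N/K$ term carries the SGD iteration-versus-depth trade-off of Theorem \ref{Thm: convex_convergence}, while the $1/N$ term is the half-order discretization error in the depth direction, now counted once for $u^{K+1}\to u^{*,N}$ and once more for $u^{*,N}\to u^*$.

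There is essentially no substantive obstacle here: the only hypothesis used beyond those of Theorem \ref{Thm: convex_convergence} is the boundedness of $u^*$, which is exactly what makes \eqref{u^N-u^*} applicable and is assumed in the statement. The one point worth checking for rigor is that the generic constants appearing in \eqref{Convex:convergence} and \eqref{u^N-u^*} are independent of $N$ and $K$; this follows from the uniform boundedness and Lipschitz hypotheses in Assumption \ref{Assumptions}, as already exploited in Lemma \ref{Lem:boundedness} and Proposition \ref{Convergence-Classic}, so the final constant $C$ is genuinely uniform.
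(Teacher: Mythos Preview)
Your proposal is correct and follows essentially the same approach as the paper: split $\E[\|u^{K+1}-u^*\|_2^2]$ via the elementary inequality $(a+b)^2\le 2a^2+2b^2$, bound the first piece by \eqref{Convex:convergence} and the second by \eqref{u^N-u^*}, and absorb the extra $C/N$ into the final constant. The paper's proof is precisely this two-line triangle-inequality argument, so there is nothing to add or adjust.
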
 
\begin{proof}
From the estimate \eqref{Convex:convergence} obtained in Theorem \ref{Thm: convex_convergence} and the fact \eqref{u^N-u^*}, we have
$$
\begin{aligned}
\E\big[ \| u^{K+1} - u^{*} \|_2^2  \big] \leq & \ 2 \E\big[ \| u^{K+1} - u^{*, N} \|_2^2  \big] + 2 \E\big[ \| u^{*, N} - u^{*} \|_2^2  \big]\\
\leq & \ C \left( \f{N}{K} + \f{1}{N}\right) + \f{C}{N}, 
\end{aligned}
$$
as desired.
\end{proof}

\subsection{Convergence analysis for the non-convex optimization case}\label{Section: Non-Convex}
Now, we study the convergence of our algorithm without the convexity assumption. In this work, we introduce the running cost in the stochastic optimal control problem to the loss in machine learning, and the running cost term performs as a regularizer to stabilize the SNN in our analysis. 

In addition to the boundedness assumptions in Assumption \ref{Assumptions}, we assume that the running cost only depends on control $u$, and we let $R(u) : = \int_{0}^T r(u_t) dt $, which is uniformly bounded from below by $C \|u\|_2$, i.e. 
\begin{equation}\label{Bounded:R}
R^2(u) \geq C \|u\|^2_2,  
\end{equation}
where $C$ is a constant that satisfies Lemma \ref{lemma-convex}. We also assume the learning rate $\eta_k$ satisfies
\begin{equation}\label{ass:eta}
\sum_{k=1}^{\infty} \eta_k = \infty, \qquad \sum_{k=1}^{\infty} (\eta_k)^2 < \infty.
\end{equation}


To proceed, we denote $J^N$ as the cost function corresponding to the fully calculated approximate gradient $\nabla J_u^N$, i.e., 
$$\lim_{\delta \rightarrow 0} \f{J^N(u + \delta v) - J^N(u)}{\delta} = \langle\nabla J^N_u(u), v \rangle, \qquad u, v \in \mathcal{U}_N,$$
Note  that for any $u_0$, $u \in \mathcal{U}_N$,
\begin{equation}\label{Prop:J^N}
\begin{aligned}
J^N(u) = & \ J^N(u_0) + \int_0^1 \f{d}{d \epsilon} \Big( J^N\big(u_0 + \epsilon (u - u_0) \big) \Big) d\epsilon \\
=& \ J^N(u_0) + \int_0^1  \big\langle \nabla J_u^N\big(u_0 + \epsilon (u - u_0) \big),  u - u_0 \big\rangle d\epsilon.
\end{aligned}
\end{equation}
Then, we can show that $J^N$ is bounded from below based on the boundedness assumption for the running cost $R$, i.e.,
\begin{equation*}\label{J^N:bounded}
\begin{aligned}
J^N(u^k) = & \ J(u^k) - J(u_0) + J^N(u_0) - \int_{0}^{1} \big\langle \nabla J_u\big(u_0 + \epsilon (u^k - u_0)\big) \\
 & \qquad -  \nabla J^N_u\big(u_0 + \epsilon (u^k - u_0)\big), (u^k - u_0) \big\rangle d\epsilon \\
\geq &  \ J(u^k) - C_0 - C \sup_{u^k \in \mathcal{U}_N}\|\nabla J_u(u^k) - \nabla J^N_u(u^k)\|_2  \|u^k - u_0\|_2 \\
= & \ \Phi(X^k_T) - C_0 + R(u^k) - C \sup_{u^k \in \mathcal{U}_N}\|\nabla J_u(u^k) - \nabla J^N_u(u^k)\|_2  \|u^k - u_0\|_2,
\end{aligned}
\end{equation*}
where $X^k_T$ is the solution of the state equation \eqref{SNN-continuous} driven by the control $u^k$.
We choose $u_0 = 0$ in the above estimate. Then, it follows from Lemma \ref{lemma-convex} that 
$$J^N(u^k)  \geq  \Phi(X^k_T) - C_0 + \underbrace{R(u^k) - \sqrt{C/N} \|u^k\|_2}_{ \mathlarger{\geq 0} } \geq \Phi(X^k_T) - C_0,$$
which indicates that $J^N$ is bounded from below.

Our aim is to  show that for a given depth $N$, $\lim
_{k \rightarrow \infty} \|\nabla J_u(u^k)\|_2 \rightarrow 0$ a.s.. To this end, we first state  two propositions that can be derived following the same  proofs as  in Theorem 5.3.1 and Theorem 5.3.3 of \cite{Zhang_BSDE}.
\begin{prop}\label{Prop:X_uv}
Assume the assumptions (a), (b), (d) in Assumption \ref{Assumptions} hold. Let $X_n^{N, u}$ and $X_n^{N, v}$ be approximate solutions introduced in  \eqref{scheme:X} driven by two controls $u, v \in \mathcal{U}_N$, and we denote $\Delta_N X^{u, v}_n : = X_n^{N, u} - X^{N, v}_n$. Then, we have
\begin{equation*}
\max_{0\leq n \leq N}|\Delta_N X_n^{u, v} |^2 \leq C | u - v|^2.
\end{equation*}
\end{prop}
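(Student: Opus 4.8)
The plan is to prove the discrete Lipschitz-type stability estimate
$\max_{0\le n\le N}|\Delta_N X^{u,v}_n|^2 \le C|u-v|^2$
by a direct forward induction on $n$, using the explicit Euler--Maruyama recursion \eqref{scheme:X} and the uniform boundedness and Lipschitz assumptions (a), (b), (d) on $f$ and $g$. Writing $\Delta_N X^{u,v}_n = X^{N,u}_n - X^{N,v}_n$, I would subtract the two recursions, obtaining
$$\Delta_N X^{u,v}_{n+1} = \Delta_N X^{u,v}_n + h\big(f(X^{N,u}_n,u_{t_n}) - f(X^{N,v}_n,v_{t_n})\big) + \big(g(u_{t_n}) - g(v_{t_n})\big)\Delta W_{t_n}.$$
Since $X^{N,u}_0 = X^{N,v}_0 = X_0$, the initial difference vanishes, so the whole difference is driven by the discrepancy in the controls propagating through $f$ and $g$.

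Next I would take conditional expectation given $\cF^W_{t_n}$, square, and use that the noise increment $\Delta W_{t_n}$ is independent of the $\cF^W_{t_n}$-measurable terms and has variance $h$, so cross terms involving $\Delta W_{t_n}$ drop out in expectation. Splitting $f(X^{N,u}_n,u_{t_n}) - f(X^{N,v}_n,v_{t_n})$ into $\big(f(X^{N,u}_n,u_{t_n}) - f(X^{N,v}_n,u_{t_n})\big) + \big(f(X^{N,v}_n,u_{t_n}) - f(X^{N,v}_n,v_{t_n})\big)$ and applying the Lipschitz bounds in $x$ and $u$ together with Young's inequality yields, after taking full expectation,
$$\E\big[|\Delta_N X^{u,v}_{n+1}|^2\big] \le (1 + Ch)\,\E\big[|\Delta_N X^{u,v}_n|^2\big] + Ch\,|u_{t_n} - v_{t_n}|^2 + C\,|u_{t_n} - v_{t_n}|^2 h,$$
where the last term comes from the diffusion difference $|g(u_{t_n}) - g(v_{t_n})|^2 h \le C|u_{t_n}-v_{t_n}|^2 h$. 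Summing the control-difference contributions over $n$ gives a total of order $h\sum_{n}|u_{t_n}-v_{t_n}|^2 \le C|u-v|^2$ in the piecewise-constant $L^2$ norm, and then the discrete Gronwall inequality closes the bound with the constant $C$ depending only on $T$ and the Lipschitz/boundedness constants.

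The only subtlety, rather than a genuine obstacle, is to make sure the estimate is uniform over $n$ (a supremum bound, not just a bound at the terminal index): this is handled either by noting that the Gronwall-summed right-hand side is monotone in $n$, or by applying the discrete Gronwall inequality to $\max_{0\le m\le n}\E[|\Delta_N X^{u,v}_m|^2]$ directly. One should also be slightly careful that in the stated scheme \eqref{scheme:X} the diffusion coefficient $g$ depends only on the control (written $g(u_{t_n})$), so the proposition's notation $X^{N,u}$ with $g=g(x,u)$ is the more general form and the argument is identical; the Lipschitz-in-$u$ property of $g$ is exactly what controls the noise-difference term. Since all constants in (a), (b), (d) are uniform and independent of the particular controls, $C$ is independent of $u,v\in\cU_N$, as claimed. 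This is essentially the discrete analogue of the continuous estimate in Theorem 5.3.1 of \cite{Zhang_BSDE}, as the statement already indicates.
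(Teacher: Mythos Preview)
Your approach is correct and is precisely the standard Euler--Maruyama stability argument via discrete Gronwall that the paper has in mind; the paper does not actually give its own proof but simply refers to Theorem~5.3.1 of \cite{Zhang_BSDE}, whose argument proceeds exactly along the lines you sketch. Note that the proposition as literally printed omits an expectation (a pathwise bound cannot hold since $\Delta W_{t_n}$ is unbounded), but your reading---bounding $\max_n \E[|\Delta_N X^{u,v}_n|^2]$---is the correct interpretation and is how the result is used downstream in Lemma~\ref{Lip:J'}.
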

\begin{prop}\label{Prop:YZ_uv}
Assume that Assumption \ref{Assumptions} hold and  let $\Delta_N Y^{u, v}_n : = Y_n^{N, u} - Y^{N, v}_n$ and $\Delta_N Z^{u, v}_n : = Z_n^{N, u} - Z^{N, v}_n$ for numerical solutions $Y^N$ and $Z^N$ with controls $u, v \in \mathcal{U}_N$. We have the following estimate
\begin{equation*}
\sum_{0\leq n \leq N} \E[|\Delta_N Y^{u, v}|^2] + h \sum_{n=0}^{N-1}\E[|\Delta_N Z^{u, v}_n|^2] \leq C |u - v|^2.
\end{equation*}
\end{prop}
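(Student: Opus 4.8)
The plan is to reproduce, layer by layer, the backward-induction difference estimates for discretised BSDEs (in the spirit of Theorems~5.3.1 and~5.3.3 of \cite{Zhang_BSDE}), the only genuinely new bookkeeping being to keep the constant independent of $N$ in the $Z$-part. Write $\Delta_N X_n$, $\Delta_N Y_n$, $\Delta_N Z_n$ for the three differences. The first observation is that the $Y$-recursion in \eqref{schemes:YZ} is \emph{closed}: no $Z$ enters it, because the driver of \eqref{BSDE-continuous} does not depend on $Z$. The terminal estimate $\E[|\Delta_N Y_N|^2]\le C|u-v|^2$ follows from $\Delta_N Y_N=\Phi'_x(X^{N,u}_N,\Gamma)-\Phi'_x(X^{N,v}_N,\Gamma)$ together with Proposition~\ref{Prop:X_uv}. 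Subtracting the two $Y$-recursions gives $\Delta_N Y_n=\E_n^X[\Delta_N Y_{n+1}]+h\,I_n$, where $I_n$ is the conditional expectation of the difference of the two drivers; I would split each driver difference into a part shifting $(X,u)$ and a part shifting $Y$, and then use the boundedness of $f_x$, the Lipschitz continuity of $f_x,r_x$, the uniform moment bounds for $Y^N$ (Lemma~\ref{Lem:boundedness} and Proposition~\ref{Convergence-Classic}) and Cauchy--Schwarz to obtain $\E[|I_n|^2]\le C\big(\E[|\Delta_N X_{n+1}|^2]+\E[|\Delta_N Y_{n+1}|^2]+|u_{t_{n+1}}-v_{t_{n+1}}|^2\big)$.

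The crucial structural ingredient is a discrete It\^o isometry. In $L^2(\mathcal{F}^X_{n+1})$ one has, for either control, the orthogonal decomposition
$$Y^{N}_{n+1}=\E_n^X[Y^{N}_{n+1}]+Z^{N}_n\,\Delta W_{t_n}+\xi^{N}_{n+1},$$
where by the definition of $Z^{N}_n$ in \eqref{schemes:YZ} the remainder satisfies $\E_n^X[\xi^{N}_{n+1}]=0$ and $\E_n^X[\xi^{N}_{n+1}(\Delta W_{t_n})^\top]=0$. Taking the difference and computing conditional second moments gives $\E[|\E_n^X[\Delta_N Y_{n+1}]|^2]+h\,\E[|\Delta_N Z_n|^2]\le\E[|\Delta_N Y_{n+1}|^2]$. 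Combining this with Young's inequality in the form $|\Delta_N Y_n|^2\le(1+h)|\E_n^X[\Delta_N Y_{n+1}]|^2+(h+h^2)|I_n|^2$, taking full expectation and inserting the bound on $\E[|I_n|^2]$, I get the one-step inequality
$$\E[|\Delta_N Y_n|^2]+h\,\E[|\Delta_N Z_n|^2]\le(1+Ch)\,\E[|\Delta_N Y_{n+1}|^2]+Ch\big(\E[|\Delta_N X_{n+1}|^2]+|u_{t_{n+1}}-v_{t_{n+1}}|^2\big).$$

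To conclude I would run this in two ways. Dropping the nonnegative $h\E[|\Delta_N Z_n|^2]$ term and using Proposition~\ref{Prop:X_uv} ($\sup_n\E[|\Delta_N X_n|^2]\le C|u-v|^2$) and $h\sum_n|u_{t_{n+1}}-v_{t_{n+1}}|^2\le C|u-v|^2$, the backward discrete Gronwall inequality gives $\max_{0\le n\le N}\E[|\Delta_N Y_n|^2]\le C|u-v|^2$, which (since $h(N+1)\le 2T$) yields the first summand of the claim in the paper's $h$-weighted norm convention. Keeping instead the $Z$-term, rearranging the one-step inequality into an upper bound for $h\E[|\Delta_N Z_n|^2]$ and summing over $n=0,\dots,N-1$ telescopes the $Y$-differences; the surviving terms are $\E[|\Delta_N Y_N|^2]$, $Ch\sum_n\E[|\Delta_N Y_{n+1}|^2]$, $Ch\sum_n\E[|\Delta_N X_{n+1}|^2]$ and $Ch\sum_n|u_{t_{n+1}}-v_{t_{n+1}}|^2$, each $\le C|u-v|^2$ by the $Y$-bound just obtained and Proposition~\ref{Prop:X_uv}. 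Adding the two estimates finishes the proof.

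The main obstacle is precisely the $Z$-part: the naive bound $\E[|\Delta_N Z_n|^2]\le \E[|\Delta_N Y_{n+1}|^2]/h$ coming from $\E_n^X[(\Delta W_{t_n})^2]=h$ costs a spurious factor $N$ upon summation, so one must exploit the orthogonality of $Z^{N}_n\Delta W_{t_n}$ to $\E_n^X[Y^{N}_{n+1}]$ in order to recover the telescoping cancellation in the $Y$-recursion that makes $h\sum_n\E[|\Delta_N Z_n|^2]$ sharp. A secondary technical point is that the driver $f_x(X,u)^\top Y$ is only locally Lipschitz jointly in $(X,Y)$, so controlling $I_n$ requires the uniform boundedness of $f_x$ together with moment bounds on $Y^N$ rather than a single global Lipschitz constant.
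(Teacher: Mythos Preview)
Your proposal is correct and is precisely the argument the paper has in mind: the paper does not actually write out a proof of this proposition but merely states that it ``can be derived following the same proofs as in Theorem 5.3.1 and Theorem 5.3.3 of \cite{Zhang_BSDE}'', and your backward one-step inequality plus discrete Gronwall for the $Y$-part, followed by the orthogonal-decomposition/telescoping argument for the $Z$-part, is exactly that derivation. Your observation that the $Y$-recursion is closed (the driver does not involve $Z$) is the reason the two parts decouple cleanly, and your remark about the paper's norm convention on the $Y$-sum is apt --- the statement as printed is slightly ambiguous, and what is actually used downstream (Lemma~\ref{Lip:J'}) is only $h\sum_n\E[|\Delta_N Y_n|^2]\le C|u-v|^2$, which your $\max_n$ bound delivers.
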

With the above propositions,  we can derive the following lemma:
\begin{lem}\label{Lip:J'}
Let $w, v \in \mathcal{U}_N$. Under Assumption \ref{Assumptions}, there exists a positive constant $C$ such that 
$$\|\nabla J^N_u(w) - \nabla J^N_u(v)\|_2^2 \leq C \|w - v\|_2^2.$$
\end{lem}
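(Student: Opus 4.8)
The plan is to show that the map $u \mapsto \nabla J^N_u(u)$ is Lipschitz by decomposing the difference $\nabla J^N_u(w) - \nabla J^N_u(v)$ along the three terms that make up the gradient in \eqref{gradient-discrete}, namely the $f_u^\top Y^N$, $g_u^\top Z^N$, and $r_u^\top$ contributions, and controlling each piece using the perturbation estimates for the forward and backward numerical solutions from Propositions \ref{Prop:X_uv} and \ref{Prop:YZ_uv}. Concretely, I would fix $w, v \in \mathcal{U}_N$, write $\Delta_N X^{w,v}_n$, $\Delta_N Y^{w,v}_n$, $\Delta_N Z^{w,v}_n$ for the differences of the numerical state, adjoint, and martingale processes, and then estimate
\[
\|\nabla J^N_u(w) - \nabla J^N_u(v)\|_2^2 = h \sum_{n=0}^{N-1} \big| \E[\phi^{w}_n] - \E[\phi^{v}_n] \big|^2,
\]
where $\phi^u_n := f_u(X^{N,u}_n, u_{t_n})^\top Y^{N,u}_n + g_u(X^{N,u}_n, u_{t_n})^\top Z^{N,u}_n + r_u(X^{N,u}_n, u_{t_n})^\top$.

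The second step is to expand $\phi^w_n - \phi^v_n$ by adding and subtracting intermediate terms so that each summand involves either a difference of one of the coefficient functions ($f_u$, $g_u$, $r_u$) evaluated at nearby arguments, or a difference of the processes ($\Delta_N Y^{w,v}_n$, $\Delta_N Z^{w,v}_n$). For the coefficient differences I would use the Lipschitz continuity of $f_u$, $g_u$, $r_u$ in both $x$ and $u$ from Assumption \ref{Assumptions}(b), together with Proposition \ref{Prop:X_uv} to bound $|\Delta_N X^{w,v}_n| \le C|w-v|$ and the trivial bound $|w_{t_n} - v_{t_n}| \le C|w-v|$; for the terms where the coefficients multiply $Y^{N,w}_n$ or $Z^{N,w}_n$, I would additionally invoke the uniform boundedness of $Y^{N,k}_n$ and $Z^{N,k}_n$ (a consequence of \eqref{bound:YZ:2} and Proposition \ref{Convergence-Classic}, as noted around \eqref{bound:J'^N}) to pull those factors out. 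For the process-difference terms, Jensen's inequality moves the expectation inside the square, and then Proposition \ref{Prop:YZ_uv} supplies exactly $\sum_n \E[|\Delta_N Y^{w,v}_n|^2] + h\sum_n \E[|\Delta_N Z^{w,v}_n|^2] \le C|w-v|^2$. Summing in $n$ with the weight $h$ and using $\sum_n h = T$ collapses everything to $C\|w-v\|_2^2$ (noting that $|w-v|$ and $\|w-v\|_2$ differ only by a fixed factor involving $h$ and $T$ for piecewise-constant controls).

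The main obstacle I anticipate is bookkeeping the $g_u^\top Z^N$ term cleanly: the bound on $Z^{N,k}_n$ needed to extract it as a multiplicative factor is the \emph{true}-solution-type bound $\sup_n \E[|Z^{N,k}_n|^2] \le C$ rather than the weaker $O(N)$ sample-wise bound from Lemma \ref{Lem:boundedness}, so I must be careful to cite the classical $L^2$ bound for the numerical $Z^{N}$ (available because $\E[|Z^{N,k}_n|^2]$ inherits boundedness from \eqref{bound:YZ:2} and \eqref{YZ:Convergence}) and not the sample-wise estimate. A secondary subtlety is that when a coefficient difference such as $g_u(X^{N,w}_n, w_{t_n}) - g_u(X^{N,v}_n, v_{t_n})$ multiplies $Z^{N,w}_n$, the product of a (random, but $L^2$-bounded) factor with another $L^2$ factor must be handled via Cauchy–Schwarz inside the expectation, after which the Lipschitz bound on the coefficient difference (which is deterministic-in-$u$ but involves the random $\Delta_N X^{w,v}_n$) and Proposition \ref{Prop:X_uv} close the estimate. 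Everything else is routine application of Young's inequality and the triangle inequality.
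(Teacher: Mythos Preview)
Your proposal is correct and follows essentially the same route as the paper's own proof: write $\|\nabla J^N_u(w)-\nabla J^N_u(v)\|_2^2 = h\sum_n |\E[\phi^w_n-\phi^v_n]|^2$, split each summand into coefficient-difference and process-difference pieces, and close with Propositions~\ref{Prop:X_uv} and~\ref{Prop:YZ_uv} together with the boundedness and Lipschitz properties in Assumption~\ref{Assumptions}. Your write-up is in fact more explicit than the paper's about two points the paper leaves implicit: the need for the \emph{classical} $L^2$ bound $\sup_n \E[|Z^{N,u}_n|^2]\le C$ (rather than the sample-wise $O(N)$ bound of Lemma~\ref{Lem:boundedness}) when a coefficient difference multiplies $Z^{N,w}_n$, and the Cauchy--Schwarz step that handles such products inside the expectation.
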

\begin{proof}
Since $w, v \in \mathcal{U}_N$, we can write $w = (w_0, w_1, \cdots, w_n, \cdots, w_N)$ and  $v = (v_0, v_1, \cdots, v_n, \cdots, v_N)$. Then, it follows from Proposition \ref{Prop:X_uv},  Proposition \ref{Prop:YZ_uv} and Assumption \ref{Assumptions} that
$$
\begin{aligned}
& \|\nabla J^N_u(u) - \nabla J^N_u(v)\|_2^2 \\
\leq & \ h \sum_{n=0}^{N-1} \Big( \E\big[ | f_u(X_n^{N, w}, w_n)^\top Y_n^{N, w} - f_u(X_n^{N, v}, v_n)^\top Y_n^{N, v} |  \\
&  \quad +  | g_u(X_n^{N, w}, w_n)^\top Z_n^{N, w} - g_u(X_n^{N, v}, v_n)^\top Z_n^{N, v} | + |r_u(w_n)^\top - r_u(v_n)^\top| \big] \Big)^2 \\
\leq & \ C h \sum_{n=0}^{N-1} \E\big[ |Y_n^{N, w} - Y_n^{N, v}|^2 + |Z_n^{N, w} - Z_n^{N, v}|^2 + |X_n^{N, w} - X_n^{N, v}|^2 + |w_n - v_n	|^2\big] \\
\leq & C h \sum_{n=0}^{N-1}|w_n - v_n|^2. 
\end{aligned}
$$

\end{proof}

We also need   the following lemma.
\begin{lem}\label{Diff:J^N}
Let $\{u^k\}_k$ be the sequence of approximate  controls obtained by \eqref{SGD-u}. Under Assumption \ref{Assumptions} and the bounded from below assumption \eqref{Bounded:R}, we have the following estimate
\begin{equation}\label{Ineq:Diff:J^N}
\E^k[J^N(u^{k+1})] \leq J^N(u^k) - \eta_k \|\nabla J^N_u(u^k)\|_2^2 + C N \eta_k^2.
\end{equation}
\end{lem}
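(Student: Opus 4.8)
The plan is to prove the descent-type inequality \eqref{Ineq:Diff:J^N} by combining the second-order expansion of $J^N$ with the smoothness of $\nabla J^N_u$ (Lemma \ref{Lip:J'}) and the unbiasedness of the sample-wise gradient $\nabla j^k_u$ conditioned on $\mathcal{G}_k$ (established just before Lemma \ref{Lem:boundedness}). First I would start from the integral identity \eqref{Prop:J^N} with $u = u^{k+1}$ and $u_0 = u^k$, which gives
\begin{equation*}
J^N(u^{k+1}) = J^N(u^k) + \int_0^1 \big\langle \nabla J^N_u\big(u^k + \epsilon(u^{k+1} - u^k)\big), u^{k+1} - u^k \big\rangle d\epsilon.
\end{equation*}
Adding and subtracting $\nabla J^N_u(u^k)$ inside the inner product, I would split the right-hand side into the main term $\langle \nabla J^N_u(u^k), u^{k+1} - u^k\rangle$ plus a remainder controlled by $\int_0^1 \|\nabla J^N_u(u^k + \epsilon(u^{k+1}-u^k)) - \nabla J^N_u(u^k)\|_2 \, \|u^{k+1}-u^k\|_2 \, d\epsilon$, which by Lemma \ref{Lip:J'} is bounded by $\tfrac{C}{2}\|u^{k+1}-u^k\|_2^2$ (absorbing the $\int_0^1 \epsilon\, d\epsilon$).

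Next I would estimate the two pieces. For the step size $\|u^{k+1}-u^k\|_2^2$, the SGD update \eqref{SGD-u} together with the non-expansiveness of the projection $\mathcal{P}_{\mathcal{U}_N}$ gives $\|u^{k+1}-u^k\|_2^2 \leq \eta_k^2 \|\nabla j^k_u(u^k)\|_2^2$, and the sample-wise gradient bound \eqref{bound:j'} from the proof of Lemma \ref{Error:Gradient} yields $\|\nabla j^k_u(u^k)\|_2^2 \leq CN$ (summing $|\nabla j^k_u(u^k_{t_n})|^2 \leq CN$ over $n$ with weight $h = T/N$). Hence the remainder term is $\leq C N \eta_k^2$. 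For the main term, again using the update rule and non-expansiveness of the projection, one has $\langle \nabla J^N_u(u^k), u^{k+1}-u^k\rangle \leq -\eta_k \langle \nabla J^N_u(u^k), \nabla j^k_u(u^k)\rangle + (\text{projection slack})$; more cleanly, I would use the standard inequality $\langle \nabla J^N_u(u^k), u^{k+1}-u^k\rangle \leq -\eta_k \langle \nabla J^N_u(u^k), \nabla j^k_u(u^k)\rangle$ valid for projected gradient steps onto a convex set, or alternatively carry the extra projection terms and bound them by $C N \eta_k^2$ as well. Then I would take conditional expectation $\E^k[\cdot]$: since $u^k$ is $\mathcal{G}_k$-measurable and $\E^k[\nabla j^k_u(u^k)] = \nabla J^N_u(u^k)$, the cross term becomes $-\eta_k \|\nabla J^N_u(u^k)\|_2^2$, and $\E^k$ of the remainder is still $\leq CN\eta_k^2$. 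Collecting everything gives exactly \eqref{Ineq:Diff:J^N}.

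The main obstacle, I expect, will be handling the projection operator $\mathcal{P}_{\mathcal{U}_N}$ cleanly in the cross term: a naive expansion of $\|u^{k+1}-u^k\|_2^2$ or of $\langle \nabla J^N_u(u^k), u^{k+1}-u^k\rangle$ produces slack terms from the projection that must either be shown non-positive (using the variational characterization of the projection onto the convex set $\mathcal{U}_N$, namely $\langle u^{k+1} - (u^k - \eta_k \nabla j^k_u(u^k)), w - u^{k+1}\rangle \geq 0$ for all $w \in \mathcal{U}_N$, applied with $w = u^k$) or absorbed into the $CN\eta_k^2$ error. A secondary, more technical point is making sure the constant $C$ in \eqref{Ineq:Diff:J^N} is genuinely independent of $k$ and $N$: this relies on the uniformity of the Lipschitz constant in Lemma \ref{Lip:J'} (which in turn comes from Propositions \ref{Prop:X_uv}–\ref{Prop:YZ_uv} with $u^k$-independent constants) and on the $O(N)$ bound for $\|\nabla j^k_u\|_2^2$ being exactly of the stated order — the factor $N$ there is the price paid for the sample-wise representation of $Z^k$, consistent with Lemma \ref{Lem:boundedness}.
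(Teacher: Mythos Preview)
Your proposal is correct and follows essentially the same route as the paper: start from the integral identity \eqref{Prop:J^N}, bound the remainder by Lemma \ref{Lip:J'} to get $J^N(u^{k+1}) \leq J^N(u^k) - \eta_k \langle \nabla J^N_u(u^k), \nabla j^k_u(u^k)\rangle + C\eta_k^2\|\nabla j^k_u(u^k)\|_2^2$, then take $\E^k[\cdot]$ and use the unbiasedness $\E^k[\nabla j^k_u(u^k)] = \nabla J^N_u(u^k)$ together with \eqref{bound:j'}. Your extra care about the projection is in fact more scrupulous than the paper, which simply substitutes $u^{k+1}-u^k = -\eta_k \nabla j^k_u(u^k)$ without comment (harmless here since $\mathcal{U}_N$ is a linear subspace and the projected update stays in it automatically).
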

\begin{proof}
From the definition of $J^N$ and  \eqref{Prop:J^N}, we have that
$$
\begin{aligned}
& J^N(u^{k+1}) - J^N(u^k) - \langle\nabla J^N_u(u^{k}), u^{k+1} - u^k \rangle \\
= & \ \int_0^1  \big\langle \nabla J_u^N\big(u^k + \epsilon (u^{k+1} - u^k) \big) - \nabla J^N_u(u^k), \  u^{k+1} - u^k \big\rangle d\epsilon.
\end{aligned}
$$
Applying Lemma \ref{Lip:J'} to the right hand side of the above equation, we get
$$J^N(u^{k+1}) - J^N(u^k) - \langle\nabla J^N_u(u^{k}), u^{k+1} - u^k \rangle  \leq C \|u^{k+1} - u^k\|_2^2.$$
Hence we can rewrite the above inequality to get the following estimate 
\begin{equation}\label{Est:J^N}
J^N(u^{k+1}) \leq J^N(u^k) - \eta_k \langle \nabla J^N_u(u^{k}), \nabla j^k_u(u^k)\rangle + C (\eta_k)^2\| \nabla j^k_u(u^k)\|_2^2.
\end{equation}
Next, we take the conditional expectation $\E^k[\cdot]$ on both sides of  \eqref{Est:J^N}. Similar to the argument  in proving Proposition \ref{Y^k=Y^N}, we have 
$$ \E^k[\langle \nabla J^N_u(u^{k}), \nabla j^k_u(u^k)\rangle] = \|\nabla J^N_u(u^k)\|_2^2,$$
which, together with the estimate estimate \eqref{bound:j'}, gives us
$$\E^k[J^N(u^{k+1})] \leq J^N(u^k) - \eta_k \|\nabla J^N_u(u^k)\|_2^2 + C N \eta_k^2 $$
as desired. 
\end{proof}

The following lemma gives the final preparation for the main theorem of this subsection.
\begin{lem}\label{Lem:Final}
Under Assumption \ref{Assumptions} and the bounded from below assumption \eqref{Bounded:R}, suppose that 
\begin{equation}\label{ass:bound:sum}
\sum_{k=0}^{\infty} \eta_k \E[\|\nabla J^N_u(u^k)\|_2^2] < \infty.
\end{equation}
Then we have \quad
$$ \lim_{k\rightarrow \infty}\|\nabla J^N_u(u^k)\|_2^2 = 0, \qquad a.s. .$$
\end{lem}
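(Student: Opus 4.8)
The plan is to run the standard ``$\liminf$-to-$\lim$'' argument of non-convex stochastic approximation. First, by monotone convergence the hypothesis \eqref{ass:bound:sum} gives $\E\big[\sum_{k\ge 0}\eta_k\|\nabla J^N_u(u^k)\|_2^2\big]<\infty$, hence $\sum_{k\ge 0}\eta_k\|\nabla J^N_u(u^k)\|_2^2<\infty$ a.s.; together with $\sum_k\eta_k=\infty$ from \eqref{ass:eta} this immediately yields $\liminf_{k\to\infty}\|\nabla J^N_u(u^k)\|_2=0$ a.s. (Alternatively, the descent estimate \eqref{Ineq:Diff:J^N}, the boundedness from below of $J^N$ established above, and the Robbins--Siegmund supermartingale lemma recover the same summability plus a.s.\ convergence of $J^N(u^k)$; I keep this in reserve but do not need it.) Next I isolate the noise: put $\xi^k:=\nabla j^k_u(u^k)-\nabla J^N_u(u^k)$, so that by Proposition \ref{Y^k=Y^N} and the relation $\E^k[\nabla j^k_u(u^k)]=\nabla J^N_u(u^k)$ derived from it we have $\E^k[\xi^k]=0$, i.e.\ $\{\xi^k\}$ is a martingale-difference sequence for $\{\mathcal{G}_{k+1}\}$, while Lemma \ref{Error:Gradient} gives $\sup_k\E[\|\xi^k\|_2^2]\le CN$. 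Since $\sum_k\eta_k^2<\infty$, the martingale $S_K:=\sum_{k=0}^{K}\eta_k\xi^k$ is $L^2$-bounded, because $\E[\|S_K\|_2^2]=\sum_{k=0}^{K}\eta_k^2\E[\|\xi^k\|_2^2]\le CN\sum_k\eta_k^2$, hence converges a.s.; in particular its tail increments satisfy $\sup_{n>m}\big\|\sum_{k=m}^{n-1}\eta_k\xi^k\big\|_2\to 0$ a.s.\ as $m\to\infty$.

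The core of the proof is to upgrade $\liminf=0$ to $\lim=0$ by ruling out oscillation. Suppose $\|\nabla J^N_u(u^k)\|_2$ fails to converge to $0$ on an event of positive probability; then $\mathbb{P}(A_\epsilon)>0$ for some rational $\epsilon>0$, with $A_\epsilon:=\{\limsup_k\|\nabla J^N_u(u^k)\|_2\ge 2\epsilon\}$. On $A_\epsilon$ the quantity $\|\nabla J^N_u(u^k)\|_2$ drops below $\epsilon$ infinitely often (by the $\liminf$) and exceeds $2\epsilon$ infinitely often, so there are infinitely many disjoint excursions, i.e.\ index pairs $m_i<n_i\to\infty$ with $\|\nabla J^N_u(u^{m_i})\|_2\le\epsilon$, $\|\nabla J^N_u(u^{n_i})\|_2\ge 2\epsilon$, and $\|\nabla J^N_u(u^k)\|_2\ge\epsilon$ for $m_i<k\le n_i$. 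On the one hand, the Lipschitz bound of Lemma \ref{Lip:J'} forces a macroscopic move of the iterates:
$$\|u^{n_i}-u^{m_i}\|_2\ \ge\ C^{-1/2}\big(\|\nabla J^N_u(u^{n_i})\|_2-\|\nabla J^N_u(u^{m_i})\|_2\big)\ \ge\ C^{-1/2}\epsilon .$$
On the other hand, since $\mathcal{U}=L^2$ is a linear space the projection $\mathcal{P}_{\mathcal{U}_N}$ acts as the identity on $\mathcal{U}_N=\mathcal{C}_N$, so \eqref{SGD-u} telescopes exactly, $u^{n_i}-u^{m_i}=-\sum_{k=m_i}^{n_i-1}\eta_k\big(\nabla J^N_u(u^k)+\xi^k\big)$, and therefore
$$\|u^{n_i}-u^{m_i}\|_2\ \le\ \sum_{k=m_i}^{n_i-1}\eta_k\|\nabla J^N_u(u^k)\|_2\ +\ \Big\|\sum_{k=m_i}^{n_i-1}\eta_k\xi^k\Big\|_2 .$$
The second term is at most $\sup_{n>m_i}\big\|\sum_{k=m_i}^{n-1}\eta_k\xi^k\big\|_2$, which $\to 0$ a.s.\ by the first paragraph (uniformly over the excursion length, which handles even single-step excursions). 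The first term is $\le\epsilon^{-1}\sum_{k\ge m_i}\eta_k\|\nabla J^N_u(u^k)\|_2^2+o(1)$, since $\|\nabla J^N_u(u^k)\|_2\le\epsilon^{-1}\|\nabla J^N_u(u^k)\|_2^2$ whenever $\|\nabla J^N_u(u^k)\|_2\ge\epsilon$ (the $o(1)$ absorbs the single endpoint term $\eta_{m_i}\|\nabla J^N_u(u^{m_i})\|_2\le\eta_{m_i}\epsilon$, and $\eta_k\to0$), and this is the tail of the a.s.\ convergent series of the first paragraph, hence $\to 0$ as well. Thus $\|u^{n_i}-u^{m_i}\|_2\to 0$ along the excursions for a.e.\ path in $A_\epsilon$, contradicting $\|u^{n_i}-u^{m_i}\|_2\ge C^{-1/2}\epsilon$. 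Hence $\mathbb{P}(A_\epsilon)=0$ for every $\epsilon$, so $\|\nabla J^N_u(u^k)\|_2\to 0$ a.s., and consequently $\|\nabla J^N_u(u^k)\|_2^2\to 0$ a.s.

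The main obstacle is this oscillation step: the per-step noise $\eta_k\xi^k$ is only small in mean square, and $\sum_k\eta_k\|\xi^k\|_2$ typically diverges, so a term-by-term triangle inequality on $\|u^{k+1}-u^k\|_2$ is too crude; one must keep the martingale sum $\sum_k\eta_k\xi^k$ intact across each entire excursion so that its \emph{a.s.}\ convergence — not merely a moment bound — controls the noise. Two minor technical points require care: first, the projection operator, which is innocuous here because $\mathcal{U}=L^2$ renders $\mathcal{P}_{\mathcal{U}_N}$ the identity on $\mathcal{C}_N$, but in a genuinely constrained setting one would instead use $\|r^k\|_2\le\eta_k\|\nabla j^k_u(u^k)\|_2$ for the projection residual $r^k$ together with the variational inequality characterising $\mathcal{P}_{\mathcal{U}_N}$; and second, the excursion endpoints $m_i,n_i$ are random, so the a.s.\ tail estimates must be applied path by path, which is legitimate precisely because $m_i,n_i\uparrow\infty$.
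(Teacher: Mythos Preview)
Your proof is correct, and its overall architecture---first establish $\liminf_{k}\|\nabla J^N_u(u^k)\|_2=0$ a.s., then run an excursion argument and use the Lipschitz property of Lemma \ref{Lip:J'} to force a contradiction---matches the paper's. The substantive difference lies in how you control $\|u^{n_i}-u^{m_i}\|_2$ along an excursion. The paper bounds it term by term, writing $\|u^{k+1}-u^k\|_2\le\sqrt{CN}\,\eta_k$ by appeal to \eqref{bound:j'}; but \eqref{bound:j'} is only an $L^2$ bound (Lemma \ref{Lem:boundedness} controls $\E[(Y^k_n)^2]$ and $\E[(Z^k_n)^2]$, not the paths), so that passage from a second-moment estimate to a pathwise bound is, strictly read, a gap. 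Your decomposition $\nabla j^k_u=\nabla J^N_u+\xi^k$, together with the a.s.\ convergence of the $L^2$-bounded martingale $\sum_k\eta_k\xi^k$, is the standard rigorous remedy; and your control of the drift part via $\|\nabla J^N_u(u^k)\|_2\le\epsilon^{-1}\|\nabla J^N_u(u^k)\|_2^2$ inside the excursion (plus the single endpoint term $\eta_{m_i}\epsilon\to0$) is clean. In short: same strategy, but your noise-handling is more careful than the paper's and avoids the implicit moment-to-pathwise step---exactly the pitfall you flag in your closing paragraph.
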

\begin{proof}
We first show that
\begin{equation}\label{lim-inf}
\liminf_{k\rightarrow \infty} \|\nabla J^N_u(u^k)\|_2^2 = 0, \qquad a.s. .
\end{equation}
If  \eqref{lim-inf} is not true, there exists constants $K>0$ and $a > 0$ such that for all $k > K$, $\|\nabla J^N_u(u^k)\|_2^2 > a$. As a result, we have from our assumption in  \eqref{ass:eta} that
$$\sum_{k=K+1}^{\infty}  \eta_k \|\nabla J^N_u(u^k)\|_2^2 >  a \sum_{k=K+1}^{\infty} \eta_k = \infty, $$
which contradicts the assumption \eqref{ass:bound:sum} in this lemma. 

On the other hand, we suppose that
\begin{equation}\label{lim-sup}
\limsup_{k\rightarrow \infty} \|\nabla J^N_u(u^k)\|_2^2 > 0, \qquad a.s. .
\end{equation}
Then, we can find two sequences of stopping times $\{m_k\}_k$ and $\{n_k\}_k$ defined inductively as follows: given $\epsilon > 0$, let
$$m_0:=\inf\{k: \|\nabla J^N_u(u^k)\|_2^2 > 2 \epsilon\},\vspace{-0.5em}$$ 
$$n_k:=\inf\{k>m_k: \|\nabla J^N_u(u^k)\|_2^2 < \epsilon\},\vspace{-0.5em}$$
$$m_{k+1}:=\inf\{k>n_k: \|\nabla J^N_u(u^k)\|_2^2 > 2 \epsilon\}.$$
Hence, we have
$$\infty > \sum_{k=0}^{\infty} \eta_k \|\nabla J^N_u(u^k)\|_2^2 \geq \sum_{k=0}^{\infty} \sum_{i=m_k}^{n_k-1}\eta_i \|\nabla J^N_u(u^i)\|_2^2 \geq \epsilon \sum_{k=0}^{\infty} \sum_{i=m_k}^{n_k-1}\eta_i. $$
Therefore, $\lim_{k\rightarrow \infty}\sum_{i=m_k}^{n_k-1}\eta_i = 0$, $a.s.$. By  \eqref{bound:j'} in the proof of Lemma \ref{Error:Gradient}, we have
$$\E^k[\|u^{k+1} - u^k\|_2^2]  = \eta_k^2 \E^k[\|\nabla j^k_u(u^k)\|_2^2] \leq C N \eta_k^2.$$
By triangle inequality, the above estimate gives 
$$\|u^{n_k} - u^{m_k}\|_2 \leq \sqrt{CN} \sum_{i=m_k}^{n_k-1} \eta_i \longrightarrow 0, \ \ \text {as } k\rightarrow \infty.$$
Then, by Lemma \ref{Lip:J'} we obtain 
\begin{equation}\label{lim_J'=0}
\lim_{k\rightarrow \infty} \|\nabla J^N_u(u^{n_k}) - \nabla J^N_u(u^{m_k} )\|_2^2 \longrightarrow 0, \quad a.s. .
\end{equation}
By definition of stopping times $\{m_k\}_k$ and $\{n_k\}_k$, we have
$$
\begin{aligned}
\epsilon < & \|\nabla J^N_u(u^{m_k})\|_2^2 - \|\nabla J^N_u(u^{n_k})\|_2^2 = \|\nabla J^N_u(u^{m_k}) - \nabla J^N_u(u^{n_k}) + \nabla J^N_u(u^{n_k})\|_2^2 - \|\nabla J^N_u(u^{n_k})\|_2^2 \\
\leq & \ (1 + 2) \|\nabla J^N_u(u^{m_k}) - \nabla J^N_u(u^{n_k})\|_2^2 + (1+\f{1}{2}) \|\nabla J^N_u(u^{n_k})\|_2^2 - \|\nabla J^N_u(u^{n_k})\|_2^2 \\
\leq &\ 3 \|\nabla J^N_u(u^{m_k}) - \nabla J^N_u(u^{n_k})\|_2^2 + \f{1}{2}  \|\nabla J^N_u(u^{n_k})\|_2^2 \\
< & \ 3 \|\nabla J^N_u(u^{m_k}) - \nabla J^N_u(u^{n_k})\|_2^2  + \f{1}{2}\epsilon.
\end{aligned}
$$
However, letting $k\rightarrow \infty$ in the above inequality will result a contradiction due to  \eqref{lim_J'=0}.

Therefore, we have $\limsup_{k\rightarrow \infty} \|\nabla J^N_u(u^k)\|_2^2 = 0$, $a.s.$. Together with  \eqref{lim-inf}, we obtain the desired convergence result in the lemma. 
\end{proof}

Now, we introduce our main convergence theorem that shows our sample-wise back-propagation method convergences in training a $N$-layer SNN.
\begin{thm}
Under Assumption \ref{Assumptions} and the bounded from below assumption \eqref{Bounded:R}, we have the following result for a given integer $N \in \mathbb{N}$:
\begin{equation*}
\lim_{k\rightarrow \infty}\|\nabla J^N_u(u^k)\|_2^2 = 0, \qquad a.s. .
\end{equation*}
\end{thm}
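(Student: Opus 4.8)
The plan is to obtain the conclusion as an immediate corollary of Lemma \ref{Lem:Final}: its only hypothesis is the summability condition \eqref{ass:bound:sum}, so the whole task reduces to verifying that $\sum_{k=0}^{\infty}\eta_k\,\E[\|\nabla J^N_u(u^k)\|_2^2]<\infty$. This I would extract from the one-step descent estimate \eqref{Ineq:Diff:J^N} of Lemma \ref{Diff:J^N} together with the fact, established just before Proposition \ref{Prop:X_uv}, that $J^N$ is bounded from below, and the step-size assumption \eqref{ass:eta}.

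Concretely, I would first take total expectation on both sides of \eqref{Ineq:Diff:J^N}, using the tower property $\E[\E^k[\cdot]]=\E[\cdot]$, to get
\[
\E[J^N(u^{k+1})] \le \E[J^N(u^k)] - \eta_k\,\E[\|\nabla J^N_u(u^k)\|_2^2] + CN\eta_k^2 .
\]
Rearranging and summing from $k=0$ to $K$ telescopes the first two terms, giving
\[
\sum_{k=0}^{K}\eta_k\,\E[\|\nabla J^N_u(u^k)\|_2^2] \le \E[J^N(u^0)] - \E[J^N(u^{K+1})] + CN\sum_{k=0}^{K}\eta_k^2 .
\]
Since $\Phi\ge 0$ by Assumption \ref{Assumptions}(e) and the running-cost bound \eqref{Bounded:R} yields $J^N(u^{K+1})\ge \Phi(X^{K+1}_T)-C_0\ge -C_0$, the subtracted term is bounded below uniformly in $K$; and with $N$ fixed, $\sum_k \eta_k^2<\infty$ by \eqref{ass:eta} makes $CN\sum_k\eta_k^2$ finite. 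Hence the partial sums on the left are uniformly bounded in $K$, and letting $K\to\infty$ gives exactly \eqref{ass:bound:sum}. Lemma \ref{Lem:Final} then delivers $\lim_{k\to\infty}\|\nabla J^N_u(u^k)\|_2^2=0$ a.s.

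I do not expect a serious obstacle in this final assembly, because the substantive work is already packaged in the earlier results: the Lipschitz continuity of $\nabla J^N_u$ (Lemma \ref{Lip:J'}, built on Propositions \ref{Prop:X_uv}--\ref{Prop:YZ_uv}), the descent inequality of Lemma \ref{Diff:J^N} (whose proof uses the unbiasedness $\E^k[\nabla j^k_u(u^k)]=\nabla J^N_u(u^k)$ and the bound \eqref{bound:j'}), and especially the stopping-time / Robbins--Siegmund-type argument inside Lemma \ref{Lem:Final} that upgrades summability-in-expectation to almost-sure convergence of the \emph{whole} sequence rather than a subsequence. The only points needing a line of care here are the integrability of $J^N(u^k)$ (which follows from the boundedness of the backward-SDE solutions and of $\Phi'_x$, hence of $\nabla J^N_u$, guaranteeing the displayed expectations are finite) and the observation that every constant of the form $CN$ is harmless since the depth $N$ is held fixed throughout the theorem. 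Alternatively one could phrase this step by applying the almost-supermartingale convergence theorem directly to $J^N(u^k)+C_0$, but routing it through Lemma \ref{Lem:Final} keeps everything self-contained within the paper.
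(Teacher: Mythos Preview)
Your proposal is correct and follows essentially the same route as the paper: verify the summability hypothesis \eqref{ass:bound:sum} of Lemma \ref{Lem:Final} by combining the one-step descent inequality \eqref{Ineq:Diff:J^N}, the lower bound on $J^N$, and the step-size condition $\sum_k\eta_k^2<\infty$. The paper merely packages the same telescoping argument through the auxiliary supermartingale $\lambda_k:=J^N(u^k)+CN\sum_{i\ge k}\eta_i^2$ and invokes the martingale convergence theorem before summing $\E[\lambda_k]-\E[\lambda_{k+1}]$, which amounts to exactly your direct telescope in expectation; your version is, if anything, a shade more economical.
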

\begin{proof}
Let $\beta_k = \eta_k \|\nabla J^N_u(u^k)\|_2^2$. We proceed to prove $\sum_{k=0}^{\infty}\E[\beta_k] < \infty$, which is the condition  \eqref{ass:bound:sum} in Lemma \ref{Lem:Final} that will give the desired result of this theorem.

Define 
$$\lambda_k = J^N(u^k) + CN \sum_{i=k}^{\infty} \eta_i^2,$$
where $C$ is a constant that satisfies  \eqref{Ineq:Diff:J^N} in Lemma \ref{Diff:J^N}.
Then, we apply  \eqref{Ineq:Diff:J^N} to get
\begin{equation}\label{lambda:inequality}
\E^k[\lambda_{k+1}] = \E^k[J^N(u^{k+1})] + CN \sum_{i=k+1}^{\infty} \eta_i^2 \leq J^N(u^k) + CN \sum_{i=k}^{\infty} \eta_i^2 - \beta_k = \lambda_k - \beta_k < \lambda_k.
\end{equation}
Since $J^N$ is bounded from below as we discussed above, $\E[\lambda_k]$ is also bounded from below.
Moreover, we know from  \eqref{lambda:inequality} that $\{\lambda_k\}_k$ is a super-martingale bounded from below. Therefore, by martingale convergence theorem, we obtain
$$\lim_{k \rightarrow \infty} \E[\lambda_k] < \infty.$$
Then, we apply  \eqref{lambda:inequality} to get
$$
\begin{aligned}
\sum_{k=0}^{\infty}\E[\beta_k] \leq & \ \sum_{k=0}^{\infty}\E\left[\lambda_k - \E^k[\lambda_{k+1}]\right] \\
= & \  \sum_{k=0}^{\infty}\big( \E[\lambda_k] - \E[\lambda_{k+1}]\big)  < \infty.
\end{aligned}
$$
Then, the desired result of this theorem holds by using the conclusion in Lemma \ref{Lem:Final}.
\end{proof}


\section{Numerical experiments} \label{Numerics}

In this section, we use three numerical examples to validate our theoretical results and demonstrate the performance of SNN (as a probabilistic learning method) trained by our sample-wise back-propagation algorithm. In Example 1, we solve a linear-quadratic stochastic optimal control problem, which satisfies the convexity assumption, and the analytical expression of the optimal control can be found in this example. We want to use this classic stochastic optimal control example to examine the convergence rate that we provided in Theorem \ref{thm:Convergence:Convex}. In Example 2, we present the performance of our sample-wise back-propagation method in training SNNs to learn random noise perturbed functions, and we will show the convergence trend of our algorithm in this function approximation example. We will first use a smaller scale SNN to learn a one-dimensional (1D) function, and then let SNN learn an 8D function. In the third example, we use our SNN method to solve the benchmark classification problem by using the MNIST handwritten dataset and the Fashion-MNIST dataset. We want to use this example to show that our method works well for classic machine learning tasks, and it also has the robustness advantage compared with the conventional deterministic convolution neural network. 
The CPU that we use to run the numerical experiments in this section is an M1 Pro Chip with 3.2GHz and 16 GB memory.

\subsection{Example 1}

In this example, we consider the following state process on the temporal interval $[0, T]$:
\begin{equation}\label{Ex1:state}
dX_t = (u_t - a_t) dt + \sigma u_t dW_t,
\end{equation}
where $X$ and $u$ are 8D state and control. The vector function $a_t$ is defined by
$$
\begin{aligned}
a_t = \left[\f{-t^2}{2\beta_t}, \f{-\sin t}{\beta_t}, \f{-0.5 \exp(1 -t)}{\beta_t}, \f{-t^3}{3 \beta_t}, \f{-\ln(1+t)}{\beta_t}, \f{-\cos 2\pi t}{\beta_t}, \f{-\tan t}{\beta_t}, \f{1-t}{2 \sigma^2 (1-t) + 2}\right]^\top,
\end{aligned}
$$
where $\beta_t = (1 + \sigma^2) + \sigma^2(1 - t)$.
The cost functional is defined by
\begin{equation}\label{Ex1:cost}
J(u) = \f{1}{2}\int_0^1 \E[| X_t - X^*_t |^2] dt + \f{1}{2}\int_0^1 |u_t|^2 dt + \f{1}{2}|X_T|^2,
\end{equation}
where $|\cdot|$ is the Euclidean norm in $\mathbb{R}^8$, and $X^*_t$ is defined by
{\small$$
\begin{aligned}
& X^*_t :=\Big[ t + \alpha_t \f{0.5 - x_T^{(1)}}{\sigma^2}, \cos t + \alpha_t \f{\sin 1 - x_T^{(2)}}{\sigma^2}, -\f{\exp(1 - t)}{2} + \alpha_t\f{0.5 - x_T^{(3)}}{\sigma^2}, t^2 + \alpha_t \f{1/3 -x_T^{(4)}}{\sigma^2}, \\
& \quad \f{1}{1+t} + \alpha_t \f{\ln 2 - x_T^{(5)}}{\sigma^2}, - 2 \pi \sin t+ \alpha_t \f{2 \pi \cos 1 - x_T^{(6)}}{\sigma^2}, \sec^2 t + \alpha_t \f{\tan 1 - x_T^{(7)}}{\sigma^2}, \f{t}{\sigma^2} + 1 - \f{1}{2 \sigma^4} \ln \f{1 + \sigma^2}{1 + \sigma^2 (1 - t)} \Big]^\top,
\end{aligned}
$$}
where $\ds \alpha_t = \ln \f{1 + 2 \sigma^2}{\sigma^2 (2 - t) + 1}$. For $\ds D : = \f{\ln (1 + \f{\sigma^2}{1 + \sigma^2})}{\sigma^2 + \ln (1 + \f{\sigma^2}{1 + \sigma^2})}$, $x_T = [x_T^{(1)}, \cdots, x_T^{(7)}]$ in the above are defined as $x_T  : = \Big[ D/2, D \cdot \sin 1, D/2, D, D\cdot  \ln 2,  D\cos2 \pi,  D \cdot \tan 1 \Big]^\top$. Since the stochastic optimal control problem \eqref{Ex1:state}-\eqref{Ex1:cost} is a linear-quadratic problem, we can find the analytic expression for the optimal control as
{\small$$
\begin{aligned}
& u^*_t :=\Big[ \f{-t^2/2 + T^2/2 - x_T^{(1)}}{\beta_t}, \f{-\sin t + \sin 1 - x_T^{(2)}}{\beta_t},  \f{-1/2 \exp(T - t) + 1/2 - x_T^{(3)}}{\beta_t}, \f{-t^3 + T^3 - 3x_T^{(4)}}{3 \beta_t}, \\
& \quad \f{-\ln (1+t) + \ln(1 + T) - x_T^{(5)}}{\beta_t},   \f{- \cos 2\pi t + \cos 2 \pi - x_T^{(6)}}{\beta_t},\f{- \tan t + \tan 1 - x_T^{(7)}}{\beta_t}, \f{T - t}{\sigma^2 (T - t) + 1 } \Big]^\top.
\end{aligned}
$$}

In this example, we let $T=1$, $\sigma = 0.5$, and $X_0 = 0$. In the first experiment, we let $N = 20, 30, 40, \cdots, 100$ with iteration steps $K = 0.2 \times N^2$ for each $N$. In the convergence analysis  \eqref{Convex:Final} proved in Theorem \ref{thm:Convergence:Convex}, we can see that when choosing $K \sim O(N^2)$, the approximation error $\|u^{K+1}-u^{*}\|_2$ has half-order convergence rate. To validate this result, we solve the stochastic optimal control problem \eqref{Ex1:state}-\eqref{Ex1:cost} repeatedly $50$ times and plot the root mean square errors (RMSEs) in Figure \ref{Ex1_Convergence} (a).
\begin{figure}[!htb]
\center
   \subfloat[Convergence with respect to $N$ ($K = 0.2 N^2$).]{ \includegraphics[width=0.42\textwidth]{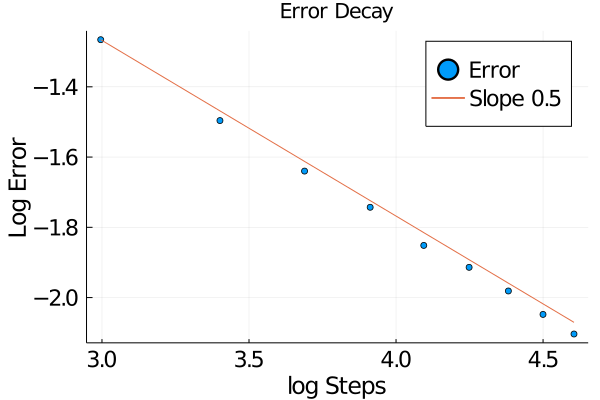}} \hspace{1em}
   \subfloat[Convergence with respect to $K$.]{ \includegraphics[width=0.42\textwidth]{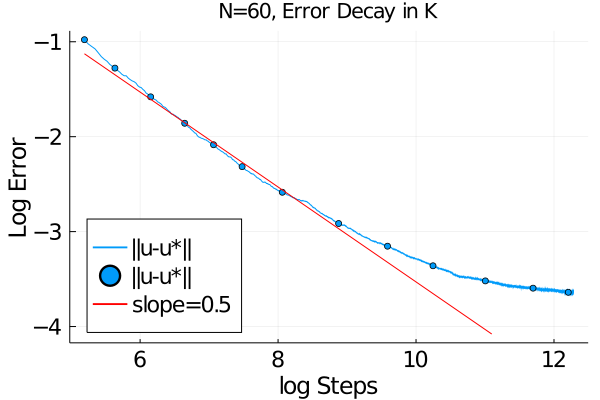}}
    \caption{Example 1: Convergence results.}
    \label{Ex1_Convergence}
\end{figure}
The blue dots give the RMSEs corresponding to $N$ (presented by $\log N$ on the $x$-axis), and the red straight line shows the slope of half-order convergence. From this figure, we can see that our algorithm does provide half-order convergence rate in this example when the relation $K \sim O(N^2)$ is satisfied. 

In Figure \ref{Ex1_Convergence} (b), we further investigate the convergence with respect to $K$, and we let $N= 60$ be a fixed partition number. The blue dots show the RMSEs corresponding to different $K$ values (presented by $\log K$ on the $x$-axis), and the red straight line indicates the slope of half-order convergence. From this figure, we can see that the convergence rate of our algorithm is well-aligned with the half-order slope while the number of iterations $K$ is relatively small. However, when $K$ values become larger, the accuracy of our algorithm does not always improve accordingly. This verifies the existence of the $\f{C}{N}$ term in the analysis  \eqref{Convex:Final}, which becomes the bottleneck in convergence with large iteration number $K$. In other words, a fixed partition number $N$ limits the convergence of algorithm no matter how many iteration steps we implement in the optimization procedure.  


\subsection{Example 2}
In this example, we use the SNN model \eqref{SNN-discrete} to learn random noise perturbed functions by using discrete function values as data and demonstrate the performance of our method in both function approximation and uncertainty quantification. We let $h=1$ and choose the activation function in the SNN as a bounded combination of sigmoid functions, which is defined as follows:
$$b(X) = \sum_{l=1}^{L} a_l \sigma(W X + V),$$
where $\sigma(x) = \f{1}{1+e^{-x}}$ is a sigmoid function, $\{a_l\}_{l=1}^{L}$ are taking values on the interval $[-4.5, 4.5]$, which is a set of weights for different sigmoid functions, $W$ is the weight matrix for state of neurons, and $V$ is a bias vector.

\subsubsection*{Case 1: 1D function approximation}
In this experiment, we approximate a 1D function: $f(x) = \sin (2 \pi x) + 0.05 \xi$, $\xi \sim N(0,1)$, and we collect altogether $10,000$ noise perturbed function values, i.e. input-output pairs $\{(x_i, f(x_i))\}_{i=1}^{10,000}$, as our data set.

In this experiment, we train an $8$ layer - $4$ neuron SNN and run $K = 8 \times 10^6$ iteration steps in the training procedure. The CPU time for this training procedure is $284.54$ seconds.
\begin{figure}[!htb]
\center
\subfloat[SNN approximation with bounded activation]{\includegraphics[scale = 0.35]{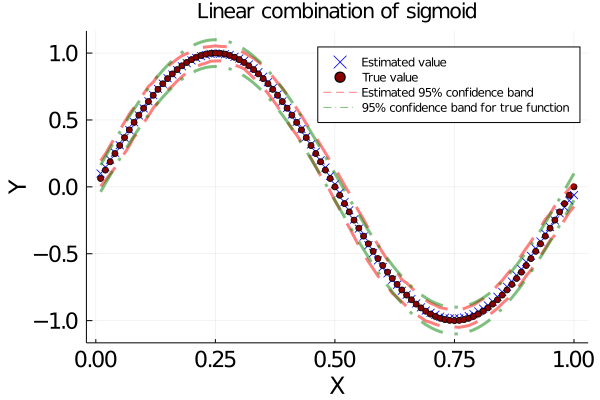} } \vspace{0.5em}
\subfloat[SNN approximation with unbounded activation]{\includegraphics[scale = 0.35]{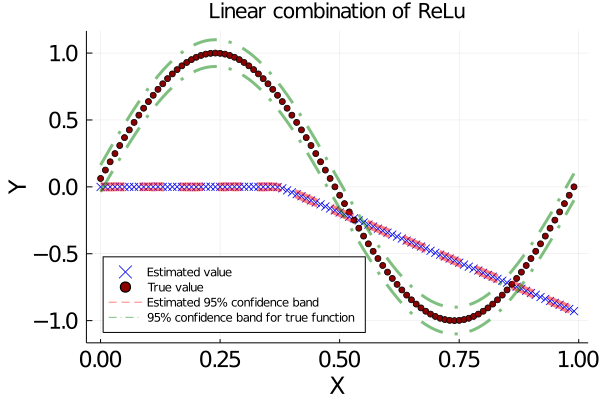} } 
 \caption{Example 1: SNN performance in 1D function approximation.}
    \label{Ex1_Unbounded}
    \vspace{-1em}
\end{figure}
Then, we use the trained SNN to estimate the function. The approximated function is presented in Fig. \ref{Ex1_Unbounded} (a) by showing the mean of SNN outputs (the blue crosses) on a uniform mesh over $[0, 1]$, and the corresponding $95\%$ confidence band (the red dashed lines) is also presented in this figure. We can see that the SNN estimates are well-aligned with the true function mean values (the red dots), and the estimated $95\%$ confidence band is very close to the true function's $95\%$ uncertainty band (the green lines).
Note that in our analysis, we require boundedness and differentiability for the activation function in the SNN. To show that violation of boundedness and differentiability may result poor performance of SNN, we repeat the above experiment by replacing sigmoid function $\sigma$ with a ReLu function. It is well known that ReLu is a very important activation function in deep learning, and it is a typical activation function that is not bounded and is not differentiable everywhere. In this experiment, we also train a $8$ layer - $4$ neuron SNN as we did to obtain Figure \ref{Ex1_Unbounded} (a), and the result is presented in Figure \ref{Ex1_Unbounded} (b).  We can see from this figure that without boundedness and differentiability for the activation function, the SNN fails in this experiment. Since both experiments are carried out under the same environment except that the activation function is chosen as sigmoid (subplot (a)) and ReLu (subplot (b)). This verifies the necessity of boundedness and differentiability assumption in our algorithm.

\subsubsection*{Case 2: 8D function approximation}
In the second experiment, we approximate the following noise perturbed 8D function:
$$
\begin{aligned}
f(x_1, \dots, x_8) := \exp(x_1) &\cos(2 \pi x_2)  + 8 x_3 (x_4 - 0.5)^2 + x_5 \\
& + \log(2 + x_6) + x_7^2 + 2 x_8 + 0.05 \xi, \quad \xi \sim N(0,1).
\end{aligned}
$$
The training data are collected on $6^8$ spatial mesh points in the hyper cube $[0, 1]^8$. Since this is a high dimensional function, we use a SNN with $N=15$ layer and put $40$ neurons in each layer. The number of optimization iterations that we carry out to train the SNN is $1.5 \times 10^7$, and the CPU time for this training procedure is $1508$ seconds. It's worthy to mention that each data sample contains only limited amount of information about the perturbation noise, and significant amount of computational effort in this 8-dimensional approximation example would contribute to finding the confidence band of the function, which is also the main challenge in uncertainty quantification for deep learning. 

\begin{figure}[htb!]
\begin{center}
\subfloat[Exact surface: $X_2$-$X_5$]{\includegraphics[scale = 0.28]{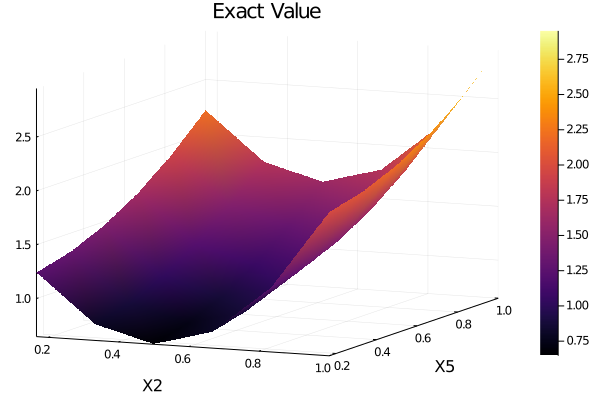} } \vspace{0.5em}
\subfloat[Approximate surface: $X_2$-$X_5$]{\includegraphics[scale = 0.28]{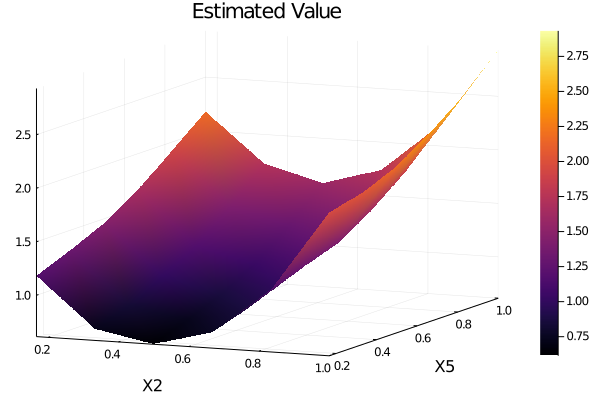} } \\
\subfloat[Exact surface: $X_4$-$X_7$]{\includegraphics[scale = 0.28]{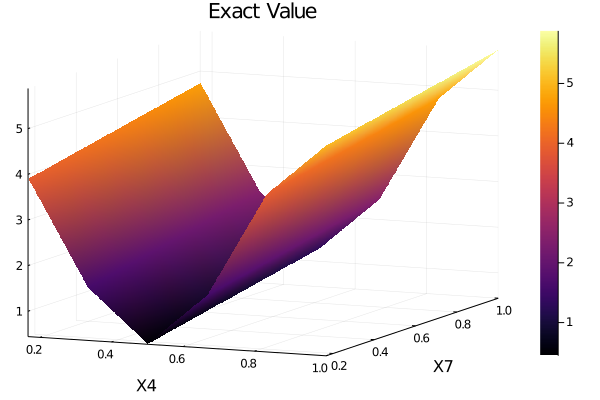} } \vspace{0.5em}
\subfloat[Approximate surface: $X_4$-$X_7$]{\includegraphics[scale = 0.28]{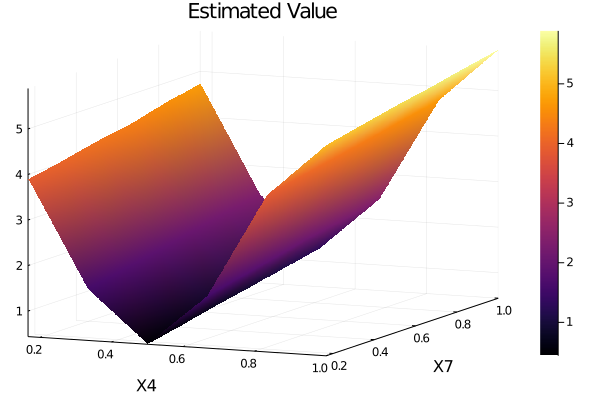} } 
\end{center}
\caption{8D Function approximation -- surface views.}\label{8D_surface}     \vspace{-1em}
\end{figure}
In Figure \ref{8D_surface}, we compare the real marginal surface of function $f$ (presented in mean values) with our estimated surface by using the trained SNN. In Figure \ref{8D_surface} (a), we present the exact $X_2$-$X_5$ marginal surface of function $f$, and Figure \ref{8D_surface} (b) shows the SNN learned $X_2$-$X_5$ marginal surface; In Figure \ref{8D_surface} (c), we present the exact $X_4$-$X_7$ marginal surface of function $f$, and Figure \ref{8D_surface} (d) shows the SNN learned $X_4$-$X_7$ marginal surface. From this figure, we can see that by training the SNN $1.5 \times 10^7$ steps with our sample-wise back-propagation algorithm, it can give very good approximation for the original function.  

\begin{figure}[htb!]
\begin{center}
\subfloat[$X_1$ section]{\includegraphics[scale = 0.28]{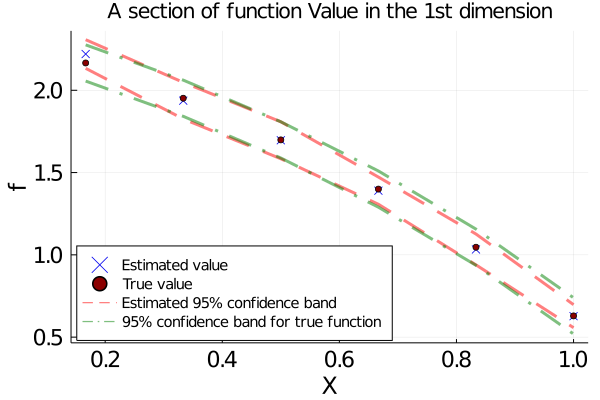} } 
\subfloat[$X_2$ section]{\includegraphics[scale = 0.28]{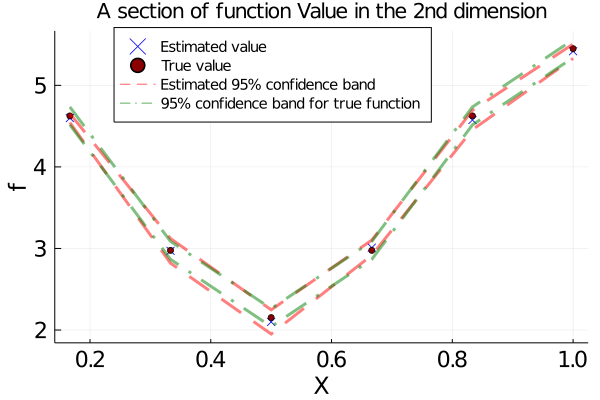} } \\
\subfloat[$X_3$ section]{\includegraphics[scale = 0.28]{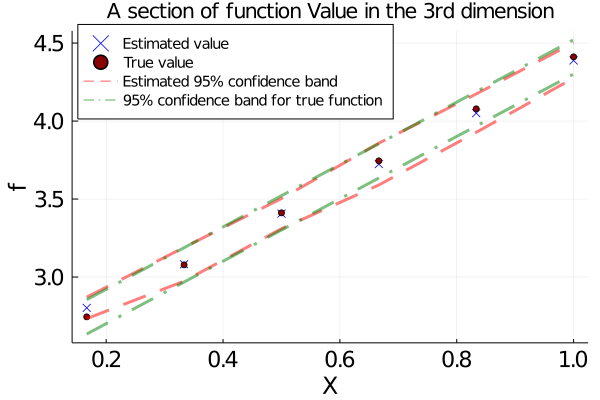} } 
\subfloat[$X_4$ section]{\includegraphics[scale = 0.28]{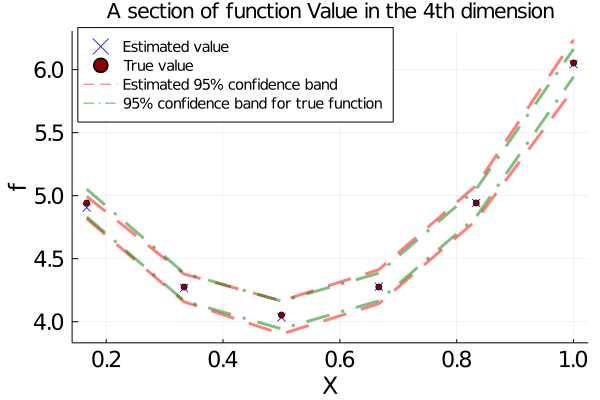} } \\
\subfloat[$X_5$ section]{\includegraphics[scale = 0.28]{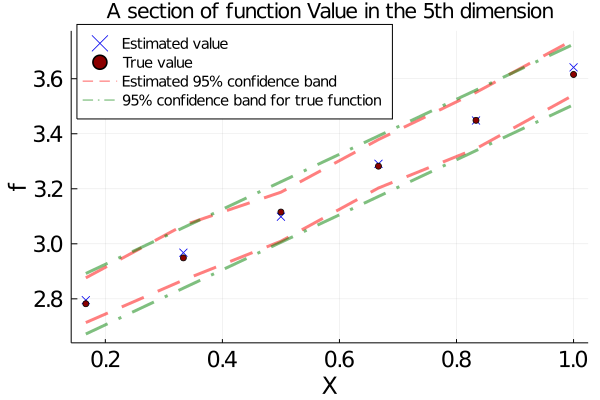} } 
\subfloat[$X_6$ section]{\includegraphics[scale = 0.28]{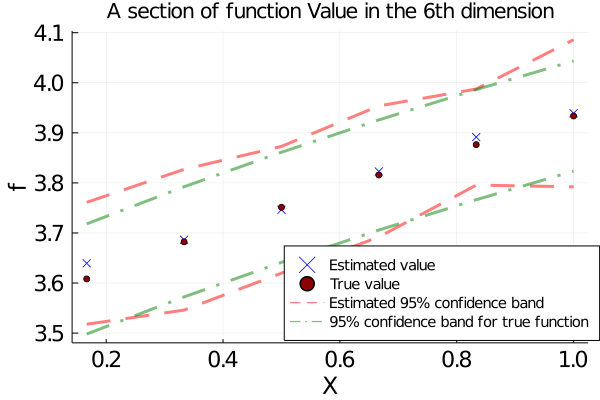} } \\
\subfloat[$X_7$ section]{\includegraphics[scale = 0.28]{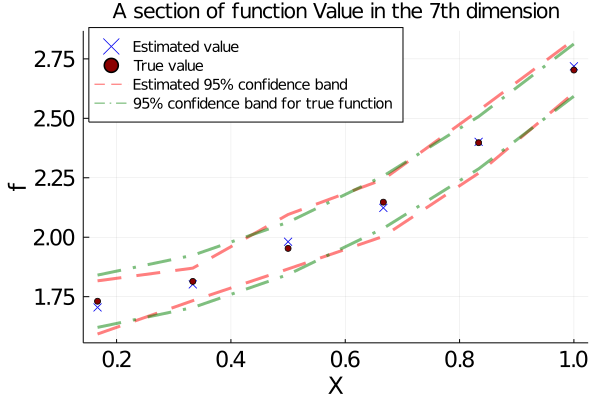} } 
\subfloat[$X_8$ section]{\includegraphics[scale = 0.28]{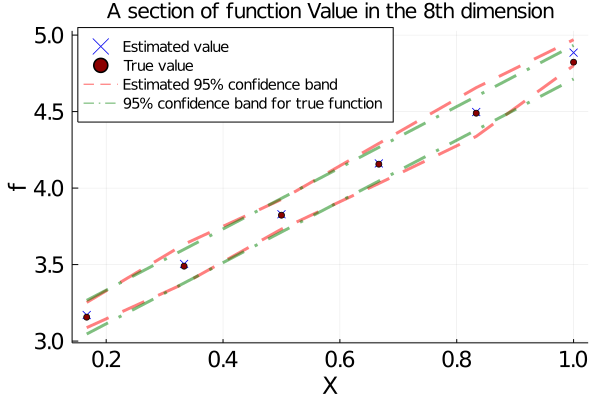} }
\end{center}
\caption{8D Function approximation -- section views}\label{8D_section} 
    \vspace{-1em}
\end{figure}
To show more details of the SNN's performance in function approximation, we present section views of each direction in Figure \ref{8D_section}, where the exact function mean values are plotted by red dots, the SNN estimated function mean values are plotted by blue crosses, the true $95\%$ uncertainty bands are presented by green dashed lines, and the SNN estimated $95\%$ confidence bands are presented by red dashed lines.  From this figure, we can see that the trained SNN can accurately approximate the function's mean values, as well as the uncertainty caused by the random variable $\xi$.


\begin{figure}[!htb]
\center
\includegraphics[scale = 0.4]{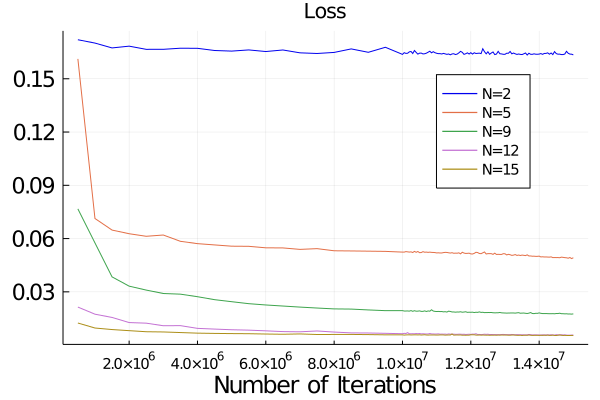} 
\caption{Example 2: Convergence in function approximation.}
    \label{8D_Convergence_K}
        \vspace{-1em}
\end{figure}
To validate the convergence of our algorithm, we train SNNs with $N = 2, 5, 9, 12, 15$, and present the accuracy of function approximation and uncertainty quantification with different numbers of training steps in Figure \ref{8D_Convergence_K} and Figure \ref{8D_Convergence_UQ_K}.
\begin{figure}[!htb]
\center
\includegraphics[scale = 0.4]{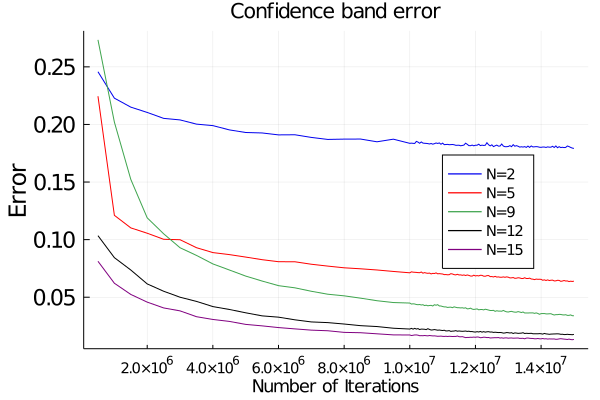} 
\caption{Example 2: Convergence in uncertainty quantification.}
    \label{8D_Convergence_UQ_K}
        \vspace{-1em}
\end{figure}
We can see from those figures that the SNN can achieve higher accuracy in estimating both function values and uncertainty bands by using more layers. For a fixed SNN depth, we can see clear convergence trend when implementing more iteration steps. However, an accuracy barrier appears in each experiment due to the fixed SNN depth, which typically result limited representation capability by using neural network models.

\subsection{Example 3}
In this example, we present the performance of our SNN algorithm in solving benchmark machine learning problems. Specifically, we solve the image classification problem with the MNIST handwritten digit dataset and the Fashion-MNIST dataset. In each dataset, we have 60000 training data samples and 10000 testing data samples. 
To enhance the learning capability of SNN to accomplish the classification task, we incorporate convolution blocks into the drift term in the SNN framework. Other DNN structures can be adopted in different machine learning scenarios. Also, instead of using a single-realization of sample to represent the state variable (as we did in the above synthetic examples), we use a mini-batch of samples to approximate expectations in our SNN algorithm. Detailed formulation of the SNN structure and our specific numerical implementation can be found on github at \textit{https://github.com/Huisun317/SNN}. 

In Figure \ref{MINST}, we present the training accuracy and testing accuracy of our SNN algorithm in solving the classification problem over the MINST handwritten dataset with different batch sizes,
\begin{figure}[htb!]
\begin{center}
\subfloat[Training accuracy]{\includegraphics[scale = 0.8]{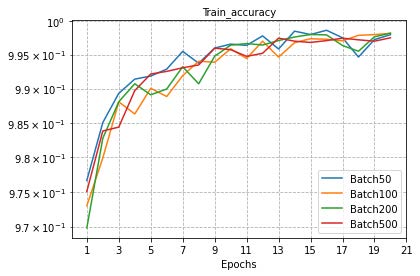} } \vspace{0.5em}
\subfloat[Testing accuracy]{\includegraphics[scale = 0.8]{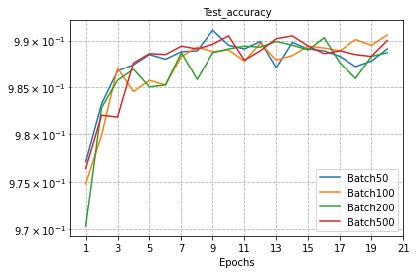} } 
\end{center}
\caption{Performance of SNN: MINST handwritten dataset.}\label{MINST}     \vspace{-1em}
\end{figure}
and in Figure \ref{Fashion-MINST}, we present the training accuracy and testing accuracy of our SNN algorithm in solving the classification problem over the Fashion-MINST dataset with different batch sizes.
\begin{figure}[htb!]
\begin{center}
\subfloat[Training accuracy]{\includegraphics[scale = 0.8]{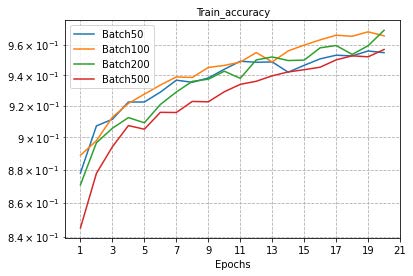} } \vspace{0.5em}
\subfloat[Testing accuracy]{\includegraphics[scale = 0.8]{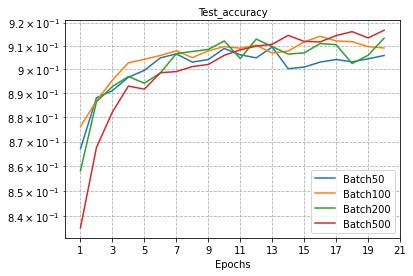} } 
\end{center}
\caption{Performance of SNN: Fashion-MINST datset.}\label{Fashion-MINST}     \vspace{-1em}
\end{figure}
We can see from those figures that our SNN algorithm can generate accurate classification results fairly fast (with various batch sizes) in both training and testing for bench-mark classification tasks. 

\vspace{0.5em}
To present the necessity of applying SNN as a probabilistic learning tool, we consider an adversarial game scenario, in which some ``attacker'' attacks the data and try to fool the neural network (pre-trained on clean dataset) while the model retrains itself along the way using the attacked dataset to play an adversarial game against the attacker. We want to use this experiment to show that our SNN algorithm is more robust than the conventional (deterministic) convolution neural network (CNN) in handling unexpected noises in the data.

\begin{figure}[!htb]
\center
\includegraphics[scale = 0.75]{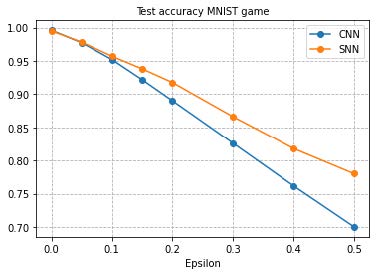} 
\caption{Example 3: Attacked model accuracy with different levels of noise (MNIST).}
    \label{Attack_MNIST}
        \vspace{-1em}
\end{figure}
In Figure \ref{Attack_MNIST}, we compare the accuracy between our SNN with CNN with respect to the level of noises added to the MNIST handwritten dataset, where the same CNN structure is adopted to our SNN model in the drift term. We can see that without noise in the dataset, both SNN and CNN provide very accurate classification results. Then, both SNN and CNN suffer lower accuracy due to the errors added to the dataset by the ``attacker''. However, our SNN algorithm started to outperform the CNN while more and more noises are added. 
\begin{table}\caption{Classification accuracy with different levels of noise: MNIST} \label{Table:MNIST}
\begin{center}
\begin{tabular}{ |c|c|c|c|c|c|c|c|c|} 
\hline
$\epsilon $ & 0.0 & 0.05& 0.1 & 0.15 & 0.2 & 0.3 & 0.4 & 0.5  \\
\hline
Attacked CNN & 0.996& 0.9773& 0.952 & 0.9217& 0.89 & 0.826 & 0.762 & 0.701  \\
\hline
Attacked SNN& 0.995& 0.9782 & 0.957 & 0.938 & 0.917 & 0.866 & 0.818 & 0.770 \\
\hline
\end{tabular}
\end{center}
\end{table}
In Table \ref{Table:MNIST}, we present the comparison of accuracy between SNN and CNN under adversarial attacks. We can see from this table that with $50\%$ of noise added to the data that causes mis-classification, the SNN could still reach $77\%$ of accuracy, which is $7\%$ higher than the conventional CNN method.

In Figure \ref{Attack_Fashion_MNIST}, we compare the accuracy between our SNN with CNN with respect to the level of noises added to the Fashion-MNIST dataset. We can see that without noise in the dataset, both SNN and CNN provide very accurate classification results. Then, both SNN and CNN suffer lower accuracy due to the errors added to the dataset by the ``attacker'', and our SNN algorithm also outperforms CNN while more and more noises are added. 
\begin{figure}[!htb]
\center
\includegraphics[scale = 0.75]{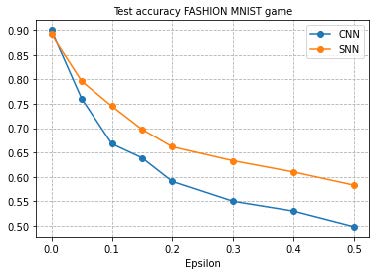} 
\caption{Example 3: Classification accuracy with different levels of noise (Fashion-MNIST).}
    \label{Attack_Fashion_MNIST}
        \vspace{-1em}
\end{figure}
\begin{table}\caption{Classification accuracy with different levels of noise: Fashion-MNIST} \label{Table:Fashion}
\begin{center}\begin{tabular}{ |c|c|c|c|c|c|c|c|c|} 
\hline
$\epsilon $ & 0.0 & 0.05& 0.1 & 0.15 & 0.2 & 0.3 & 0.4 & 0.5  \\
\hline
Attacked CNN & 0.915& 0.760& 0.668 & 0.639& 0.596 & 0.550 & 0.531 & 0.499 \\
\hline
Attacked SNN& 0.910& 0.796 & 0.745 & 0.698 & 0.662 & 0.633 & 0.607 & 0.583\\
\hline
\end{tabular}
\end{center}
\end{table}
In Table \ref{Table:Fashion}, we present the comparison of accuracy between SNN and CNN under adversarial attacks. We can see from this table that with $50\%$ of noise added to the data that causes mis-classification, the SNN could still reach $58\%$ of accuracy, which is $8\%$ higher than the conventional CNN method.



\end{document}